\newtheorem{theorem}{Theorem}[section]
\newtheorem{definition}[theorem]{Definition}
\newtheorem{example}[theorem]{Example}
\newtheorem{lemma}[theorem]{Lemma}
\newtheorem{proposition}[theorem]{Proposition}
\newtheorem{remark}[theorem]{Remark}
\numberwithin{theorem}{section}
\numberwithin{equation}{section}
\def\l{\lambda}
\def\a{\alpha}
\begin{document}

\title{Analogues of Alder-Type Partition Inequalities for Fixed Perimeter Partitions}
\author{Ling Chen}
\address{Occidental College}
\email{lchen2@oxy.edu}
\author{Isabelle Hernandez}
\address{Oregon State University}
\email{hernisab@oergonstate.edu}
\author{Zain Shields}
\address{University of California Berkeley}
\email{zainshields@berkeley.edu}
\author{Holly Swisher}
\address{Oregon State University}
\email{swisherh@oergonstate.edu}

\thanks{This work was done during the Summer 2023 REU program in Number Theory at Oregon State University supported by NSF grant DMS-2101906}

\begin{abstract}
In a 2016 paper, Straub proved an analogue to Euler's partition identity for partitions with fixed perimeter.  Later, Fu and Tang provided a refinement and generalization of Straub's analogue to $d$-distinct partitions as well as a result related to the first Rogers-Ramanujan identity.  Motivated by Alder-type partition identities and their generalizations, we build on work of Fu and Tang to establish generalized Alder-type partition inequalities in a fixed perimeter setting, and notably, a reverse Alder-type inequality.  

\end{abstract}

\maketitle

\section{Introduction and Statement of Results}

Given a positive integer $n$, a \textit{partition} $\pi$ of size $n$ is a 
nonincreasing sequence of positive integers called \textit{parts} that sum to $n$.  One can visualize a partition $\pi$ with a \textit{Ferrers diagram}, in which each part $\pi_i$ is represented by a row of $\pi_i$ dots.  From top to bottom, the rows are arranged in nonincreasing order and left-justified.  For example, the following diagram represents the partition $7+2+1$.
$$\begin{matrix}
    \bullet & \bullet & \bullet & \bullet & \bullet & \bullet & \bullet
    \\\bullet & \bullet
    \\\bullet
\end{matrix}$$ 

For any positive integer $n$, we define the function $p(n)$ to count the total number of partitions of size $n$.  We will use $p(n\mid \text{condition})$ to denote the number of partitions of size $n$ satisfying the specified condition.  

The study of partitions of size $n$ is profoundly related to many areas of mathematics (see \cite{Andrews} for an introduction to this topic).  However, partitions are also studied and counted from perspectives other than size.

For any partition $\pi$, we let $\alpha(\pi)$ denote the largest part and $\lambda(\pi)$ to denote the number of parts. We define the \emph{perimeter} of a partition to be the largest hook length of $\pi$, denoted $\Gamma(\pi)$, which is also given by the number of dots traversing the top row and the left column of its Ferrers diagram.  In other words, $\Gamma(\pi)=\alpha(\pi)+\lambda(\pi)-1.$ As an example, we show below the Ferrers diagrams of two partitions with perimeter 4.

\begin{center}
    $\begin{matrix}
        \color{red}\bullet & \color{red}\bullet & \color{red}\bullet \\
        \color{red}\bullet & \bullet 
    \end{matrix}$
    \hspace{8mm}
    $\begin{matrix}
        \color{red}\bullet & \color{red}\bullet & \\
        \color{red}\bullet & \bullet \\
        \color{red}\bullet & \bullet
    \end{matrix}$
    \\
\end{center} 

As an analogue to the traditional partition function $p(n),$ we define $r(n)$ to count the number of partitions with fixed perimeter $n$. 

A fundamental partition theorem of Euler gives that for any positive integer $n$, the number of partitions of size $n$ into distinct parts is equal to the number of partitions of size $n$ into odd parts.  Namely,
\begin{align*}
    p(n\mid \text{distinct parts})=p(n\mid \text{odd parts}).
\end{align*}

In 2016, Straub proved the following analogue to Euler's Theorem for fixed perimeter partitions.

\begin{theorem}[Straub \cite{Straub}, 2016] \label{thm:Straub}
    The number of partitions with \textit{perimeter} $n$ into distinct parts is equal to the number of partitions of with \textit{perimeter} $n$ into odd parts.
\end{theorem}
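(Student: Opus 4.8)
\textit{Proof proposal.} The plan is to show that both quantities obey the Fibonacci recursion with the same initial data. Write $d(n)$ for the number of partitions of perimeter $n$ into distinct parts and $o(n)$ for the number of partitions of perimeter $n$ into odd parts. I will produce explicit bijections witnessing $d(n)=d(n-1)+d(n-2)$ and $o(n)=o(n-1)+o(n-2)$ for $n\ge 3$, together with the direct computation $d(1)=o(1)=d(2)=o(2)=1$ (on the distinct side the only partitions counted are $(1)$ and $(2)$; on the odd side they are $(1)$ and $(1,1)$). The common mechanism is that, since $\Gamma(\pi)=\alpha(\pi)+\lambda(\pi)-1$, deleting a column lowers the perimeter by $1$ precisely when it does not also lower the number of parts, and by $2$ when it does.

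For the distinct-part count: given $\pi$ of perimeter $n$ with distinct parts, delete its first column, i.e.\ subtract $1$ from every part and discard a resulting $0$ if one appears. If the smallest part of $\pi$ exceeds $1$, the result is a distinct-part partition of perimeter $n-1$ (same number of parts, largest part reduced by $1$); if the smallest part of $\pi$ equals $1$, a row and a column are removed and the result has perimeter $n-2$. Both maps invert cleanly — adjoin a column to $\mu\in d(n-1)$, or adjoin a column and then a new part of size $1$ to $\mu\in d(n-2)$ — and distinctness is preserved throughout, giving a bijection $d(n)\leftrightarrow d(n-1)\sqcup d(n-2)$.

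For the odd-part count: given $\pi$ of perimeter $n$ with all parts odd, split on whether $\pi$ has a part equal to $1$. If it does, delete one such part: this fixes $\alpha(\pi)$ and lowers $\lambda(\pi)$ by $1$, yielding an odd-part partition of perimeter $n-1$. If every part of $\pi$ is $\ge 3$, subtract $2$ from every part: this fixes $\lambda(\pi)$, lowers $\alpha(\pi)$ by $2$, and keeps every part odd and positive, yielding an odd-part partition of perimeter $n-2$. Each operation inverts (adjoin a part $1$; add $2$ to every part), so we obtain a bijection $o(n)\leftrightarrow o(n-1)\sqcup o(n-2)$. Matching recursions and initial values then force $d(n)=o(n)$ for all $n$, and composing the two families of bijections in fact yields an explicit recursively defined bijection between the two sets.

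The step requiring the most care is the bookkeeping in these recursions — in particular, confirming that the ``smallest part $=1$'' versus ``all parts $\ge 3$'' dichotomy is genuinely exhaustive and that each stated inverse lands back in the correct perimeter class with the correct part type; everything else is routine. As a self-contained alternative one can bypass bijections and compute the generating functions directly: a distinct partition of perimeter $n$ is a choice of largest part $a$ together with a size-$(n-a)$ subset of $\{1,\dots,a-1\}$, so $\sum_n d(n)q^n=\sum_{a\ge 1}q^a(1+q)^{a-1}=q/(1-q-q^2)$; an odd partition of perimeter $n$ is a choice of largest part $2m+1$ together with a multiset of $n-2m-1$ odd parts $\le 2m+1$, so $\sum_n o(n)q^n=\sum_{m\ge 0}q^{2m+1}(1-q)^{-(m+1)}=q/(1-q-q^2)$ as well.
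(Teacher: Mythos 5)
Your argument is correct, but it proceeds differently from the paper, which does not prove Theorem \ref{thm:Straub} directly: the result is quoted from Straub, and the machinery the paper develops (following Fu and Tang) encodes the boundary of the Ferrers diagram as a word in $\{E,N\}$, reads off the rational generating functions \eqref{Hda}--\eqref{Fda}, and obtains the identity (in the general $d$, $a$ setting, of which Straub's theorem is the case $d=a=1$) by setting $x=y=1$, both sides becoming $q/(1-q-q^{d+1})$. Your first argument instead establishes the Fibonacci recursion $d(n)=d(n-1)+d(n-2)$ and $o(n)=o(n-1)+o(n-2)$ by explicit column-deletion and part-deletion/part-shrinking bijections, with matching initial values; the case analysis is exhaustive (on the distinct side at most one part equals $1$, so deleting the first column drops the perimeter by exactly $1$ or $2$; on the odd side ``has a part $1$'' versus ``all parts $\ge 3$'' covers everything since parts are odd), and the small cases $n=1,2$ are handled correctly, so the induction closes. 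What this buys is an elementary, self-contained proof that also produces a recursively defined bijection between the two sets, something the generating-function route does not give. Your second, generating-function argument is closer in spirit to the paper: counting distinct-part partitions of perimeter $n$ with largest part $a$ by $\binom{a-1}{n-a}$ and odd-part partitions with largest part $2m+1$ by a stars-and-bars count is exactly the $d=1$, $a=1$ specialization of the refinement formulas in Lemmas \ref{lem:Hbinom} and \ref{lem:Fbinom} (and matches the $\binom{n-k}{k-1}$ refinement of Theorem \ref{thm:FT}), summed over the largest part to give $q/(1-q-q^2)$ on both sides — the same rational function the paper obtains from the $\{E,N\}$-word encoding.
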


In light of Straub's result it is natural to further investigate where partitions in terms of size may align or differ from partitions in terms of perimeter. \\

The celebrated Rogers-Ramanujan identities have partition-theoretic interpretations as
\begin{align*}
    p(n\mid \text{2-distinct parts}) = p(n\mid \text{parts} \equiv \pm 1 \!\!\!\!\! \pmod{5}), \\
    p(n\mid \text{2-distinct parts} \geq 2)=p(n\mid \text{parts} \equiv \pm 2 \!\!\!\!\! \pmod{5}).
\end{align*}
And relatedly, a theorem of Schur \cite{Schur} gives that the number of partitions of $n$ into parts that are 3-distinct is greater than or equal to the number of partitions of $n$ into parts congruent to $\pm 1$ modulo 6.  Namely,
\[
    p(n\mid \text{3-distinct parts}) \geq p(n\mid \text{parts} \equiv \pm 1 \!\!\!\!\! \pmod{6}).
\]
Motivated by these results, Alder \cite{alder} considered the following two functions,
\begin{align*}
q_d(n) &= p(n\mid \text{d-distinct parts}), \\
Q_d(n) &= p(n\mid \text{parts} \equiv \pm 1 \!\!\!\!\! \pmod{d+3}),
\end{align*}
and conjectured that $q_d(n) \geq Q_d(n)$ for all positive integers $d$ and $n$.  This conjecture was proved over the span of several decades by Andrews \cite{Andrews}, Yee \cite{Yee_04, Yee_08}, as well as Alfes, Jameson, and Lemke Oliver \cite{AJLO}.

In the style of Alder, Fu and Tang \cite{FuTang} generalized Theorem \ref{thm:Straub} by defining and establishing generating functions for
\begin{align*}
h_d(n) &= r(n\mid \text{parts are d-distinct}), \\
f_d(n) &=r(n\mid \text{parts are} \equiv 1 \!\!\!\!\!\pmod{d+1}),
\end{align*}
and proved the following theorem.
\begin{theorem}[Fu and Tang \cite{FuTang} 2018]
For all $n,d \in \mathbb{N}$, $$h_d(n)=f_d(n).$$
\end{theorem}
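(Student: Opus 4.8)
The plan is to pass to two-variable generating functions that record the largest part and the number of parts, and then to specialize to the diagonal so that the perimeter appears. Concretely, for a set $\mathcal{C}$ of nonempty partitions put $F_{\mathcal{C}}(x,y) = \sum_{\pi \in \mathcal{C}} x^{\alpha(\pi)} y^{\lambda(\pi)}$. Since $\Gamma(\pi) = \alpha(\pi) + \lambda(\pi) - 1$, setting $x = y = q$ gives $F_{\mathcal{C}}(q,q) = q \sum_{n} r(n \mid \mathcal{C})\, q^{n}$. Hence it is enough to show that the class of $d$-distinct partitions and the class of partitions all of whose parts are $\equiv 1 \pmod{d+1}$ have the same $F(q,q)$; in fact we will see both equal $q^2/(1 - q - q^{d+1})$, which also recovers Theorem~\ref{thm:Straub} when $d=1$.

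For the congruence class the parts lie in $\{\,k(d+1)+1 : k \ge 0\,\} = \{1, d+2, 2d+3, \dots\}$. I would organize the sum according to the index $K$ of the largest admissible part that occurs: for fixed $K$ the part $K(d+1)+1$ appears with some multiplicity $\ge 1$ and each smaller admissible part with multiplicity $\ge 0$, while $\alpha = K(d+1)+1$ and $\lambda$ is the total multiplicity. Summing the geometric series over the multiplicities and then over $K$ collapses everything to
\[
F_f(x,y) \;=\; \frac{xy}{1 - y - x^{d+1}},
\qquad\text{so}\qquad
F_f(q,q) \;=\; \frac{q^2}{1 - q - q^{d+1}}.
\]

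For the $d$-distinct class I would first apply the standard dilation: subtracting the staircase $\bigl((\ell-1)d,\,(\ell-2)d,\,\dots,\,d,\,0\bigr)$ from a $d$-distinct partition with exactly $\ell$ parts produces an ordinary partition with exactly $\ell$ positive parts, and this is a bijection under which $\lambda$ is unchanged and $\alpha$ decreases by $(\ell-1)d$. Thus $F_h(x,y) = \sum_{\ell \ge 1} x^{(\ell-1)d} y^{\ell} P_\ell(x)$, where $P_\ell(x)$ is the sum of $x^{(\text{largest part})}$ over partitions with exactly $\ell$ positive parts. Conjugating turns these into partitions whose largest part is exactly $\ell$ and exchanges the largest-part and number-of-parts statistics, so counting by multiplicities gives $P_\ell(x) = x/(1-x)^{\ell}$. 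Substituting and summing the geometric series in $\ell$ yields
\[
F_h(x,y) \;=\; \frac{xy}{1 - x - y x^{d}},
\qquad\text{so}\qquad
F_h(q,q) \;=\; \frac{q^2}{1 - q - q^{d+1}}.
\]
Comparing the coefficient of $q^{n+1}$ on the two sides of $F_h(q,q) = F_f(q,q)$ then gives $h_d(n) = f_d(n)$ for all $n$ and $d$. (Note that $F_h$ and $F_f$ are genuinely different bivariate series; it is only on the diagonal that they coincide.)

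The geometric-series bookkeeping is routine; the one genuinely delicate point — and the reason the two-variable set-up is forced on us — is that the perimeter is assembled from the largest part, which is a maximum rather than an additive statistic, so neither generating function factors as an infinite product in the naive way. The device that rescues the computation is to stratify by the largest part (in the congruence case) or to conjugate so as to expose it (in the $d$-distinct case, after the staircase dilation); getting these two reductions right, and checking that the resulting rational functions agree on $x=y$, is the heart of the argument.
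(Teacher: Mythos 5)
Your proof is correct, and it reaches the same pair of rational generating functions that the paper (following Fu and Tang) uses, but by a genuinely different combinatorial route. Your $F_h(x,y)=\frac{xy}{1-x-yx^{d}}$ and $F_f(x,y)=\frac{xy}{1-y-x^{d+1}}$ are exactly the paper's $H_d^{(1)}(x,y,q)$ and $F_d^{(1)}(x,y,q)$ from \eqref{Hd1}--\eqref{Fd1} with the (redundant) variable $q$ suppressed, since in the paper the exponent of $q$ is always $\alpha+\lambda-1$; your specialization $x=y=q$ plays the role of their $x=y=1$. The difference is how the rational forms are derived: the paper encodes the boundary of the Ferrers diagram as a word in $\{E,N\}$, so that both the $d$-distinct condition (each intermediate $N$ forces $d$ following $E$'s) and the congruence condition (intermediate $E$'s come in blocks of $d+1$) become local constraints on the letters, yielding both denominators by one uniform geometric-series argument; you instead handle the two families separately, using the classical staircase dilation plus conjugation to get $P_\ell(x)=x/(1-x)^{\ell}$ for the $d$-distinct side, and stratification by the largest admissible part for the congruence side. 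Your derivation is more elementary and makes the rational functions transparent via standard bijections, at the cost of two separate arguments; the path-word encoding buys uniformity and, more importantly for this paper, extends immediately to the shifted functions $h_d^{(a)}$, $f_d^{(a)}$ (just replace the initial $xyq$ by $x^ayq^a$) and keeps the full $(\alpha,\lambda)$ refinement that is needed later for the duality in Theorem \ref{theorem:had=fad}. Your diagonal-substitution step is legitimate as a formal power series operation since every monomial has $\alpha,\lambda\geq 1$ and each total degree receives only finitely many contributions, and your closing remark that the bivariate series differ off the diagonal is exactly the point of the refined statement in Theorem \ref{thm:FT}.
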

Fu and Tang further proved a refinement of their result for $d=1$.
\begin{theorem}[Fu and Tang \cite{FuTang} 2018] \label{thm:FT}
The number of partitions counted by $h_1(n)$ with exactly k parts is equal to the number of partitions counted by $f_1(n)$ with largest part of size $2k-1$ and both are enumerated by $\binom{n-k}{k-1}$.
\end{theorem}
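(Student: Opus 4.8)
The plan is to prove the theorem by direct enumeration: compute the size of each of the two families and show that both equal $\binom{n-k}{k-1}$, from which the asserted equality is immediate. Throughout I will use that $\Gamma(\pi)=\a(\pi)+\l(\pi)-1$ and the conventions that $h_1$ counts perimeter-$n$ partitions into distinct parts and $f_1$ counts perimeter-$n$ partitions into odd parts.

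First consider the partitions counted by $h_1(n)$ having exactly $k$ parts: these are sequences $\pi_1>\pi_2>\cdots>\pi_k\geq 1$ with $\Gamma(\pi)=\pi_1+k-1=n$. The perimeter condition forces $\pi_1=n-k+1$, so such a partition is determined by the remaining parts $\pi_2>\cdots>\pi_k$, which may be any choice of $k-1$ distinct integers from $\{1,2,\ldots,n-k\}$ (they must be positive and strictly smaller than $\pi_1$, and no further constraint is imposed). Hence there are exactly $\binom{n-k}{k-1}$ of them, with the usual convention that this equals $0$ when $n<2k-1$, consistent with the fact that no such partition exists in that range.

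Next consider the partitions counted by $f_1(n)$ with largest part $2k-1$: these are partitions into odd parts with $\a(\pi)=2k-1$ and $\Gamma(\pi)=(2k-1)+\l(\pi)-1=n$, so the number of parts is forced to be $\l(\pi)=n-2k+2$. Recording the multiplicities $m_1,m_3,\ldots,m_{2k-1}$ of the allowed odd parts $1,3,\ldots,2k-1$, such a partition corresponds exactly to a tuple of nonnegative integers with $m_1+m_3+\cdots+m_{2k-1}=n-2k+2$ and $m_{2k-1}\geq 1$. Replacing $m_{2k-1}$ by $m_{2k-1}-1$ reduces this to counting $k$ nonnegative integers summing to $n-2k+1$, and stars-and-bars gives $\binom{(n-2k+1)+(k-1)}{k-1}=\binom{n-k}{k-1}$. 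Comparing the two counts establishes the theorem.

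There is no genuinely hard step here; the points requiring care are (i) translating the perimeter constraint correctly into the forced value of the largest part, respectively the forced number of parts, and verifying it places no hidden restriction on the remaining data, and (ii) in the odd-part case, keeping track of the requirement that the maximal odd part $2k-1$ actually occurs. If an explicit bijection is desired in place of the double count, one can map a $(k-1)$-subset $\{a_1<\cdots<a_{k-1}\}\subseteq\{1,\ldots,n-k\}$ to its gap sequence $\bigl(a_1-1,\ a_2-a_1-1,\ \ldots,\ a_{k-1}-a_{k-2}-1,\ (n-k)-a_{k-1}\bigr)$ of $k$ nonnegative integers summing to $n-2k+1$, and read these off as $(m_1,m_3,\ldots,m_{2k-3},\,m_{2k-1}-1)$; this realizes the equality bijectively.
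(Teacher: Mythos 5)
Your proof is correct, and it takes a more elementary route than the paper does. The paper does not prove Theorem~\ref{thm:FT} in isolation; it recovers it (and generalizes it to all $d$ and $1\leq a\leq d+1$ in Theorem~\ref{theorem:had=fad}) by expanding the generating functions \eqref{Hda} and \eqref{Fda} via geometric series and the binomial theorem, which yields the refinement formulas of Lemmas~\ref{lem:Hbinom} and~\ref{lem:Fbinom}; specializing $d=a=1$ with $(\a,\l)=(n-k+1,k)$, respectively $(\a,\l)=(2k-1,n-2k+2)$, gives $\binom{n-k}{k-1}$ in both cases, exactly your two counts. You instead compute both sides by direct enumeration: the perimeter constraint forces the largest part $n-k+1$ (resp.\ the number of parts $n-2k+2$), and then you count subsets of $\{1,\dots,n-k\}$ on one side and multiplicity vectors of the odd parts $1,3,\dots,2k-1$ with $m_{2k-1}\geq 1$ by stars and bars on the other. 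Your stars-and-bars count is essentially the alternative argument the paper records in the remark following Lemma~\ref{prop:Lbincof}, while your subset count replaces the generating-function expansion with a one-line combinatorial observation. What the paper's approach buys is uniformity: the same two lemmas feed directly into the duality statement of Theorem~\ref{theorem:had=fad} and the later comparison with $\ell_d^{(a)}(n)$, whereas your argument is tailored to $d=1$, $a=1$ but is self-contained and also supplies an explicit bijection (via gap sequences) rather than only a double count, which the paper does not give. Your handling of the degenerate range $n<2k-1$ is also consistent on both sides, so there is no gap.
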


There have been additional recent developments in the study of Alder-type inequalities.  In 2020, Kang and Park \cite{KP} generalized $q_d(n)$ and $Q_d(n)$ as follows 
\begin{align*}
q_d^{(2)}(n) &=p(n\mid \text{parts are d-distinct and} \geq 2), \\
Q_d^{(2)}(n) &=p(n\mid \text{parts are} \equiv \pm 2 \!\!\!\!\!\pmod{d+3}),
\end{align*}
in order to incorporate the second Rogers-Ramanujan Identity, which can be stated as $q_2^{(2)}(n)=Q_2^{(2)}(n)$.  Although it does not hold that $q_d^{(2)}(n) \geq Q_d^{(2)}(n)$ for all $d$ and $n$, Kang and Park \cite{KP}  introduced the modification 
$$Q_d^{(2,-)}(n)=p(n\mid \text{parts are} \equiv \pm 2 \!\!\!\! \pmod{d+3} \text{ excluding the part } d+1),$$ 
and conjectured that $q_d^{(2)}(n) \geq Q_d^{(2,-)}(n)$ for all positive integers $d$ and $n$. They prove their conjecture for $d=2^r-2$ when $r=1,2$ or $r\geq 4$.  

Later, Duncan, Khunger, Swisher and Tamura \cite{DKST} proved Kang and Park's conjecture for $d \geq 62$ using combinatorial and $q$-series techniques, and Sturman and Swisher \cite{Sturman, SS} prove all remaining cases except $d=3,4,5$ using asymptotic techniques.  Duncan, Khunger, Swisher, and Tamura also defined the generalizations 
\begin{align*}
q_d^{(a)}(n) & =p(n\mid \text{parts are d-distinct and} \geq a), \\
Q_d^{(a)}(n) & =p(n\mid \text{parts} \equiv \pm a \!\!\!\!\pmod{d+3}).
\end{align*}

Further work was done by Inagaki and Tamura \cite{IT} generalizing techniques to higher values of $a$. Armstrong, Ducasse, Meyer, and Swisher \cite{ADMS} also investigated general shifts of both $a$ and $d$. Kang and Kim \cite{K-K} compared $q_d^{(a)}(n)$ and $Q_d^{(a)}(n)$ along with the variations allowing the parameters of each term to vary independently and discovered results about general inequalities based on $a,d,$ and $n$. \\

The goal of this paper is to begin an investigation of further analogues of Alder-type inequalities in the fixed perimeter setting with regard to additional recent developments in this area.

 \subsection{Our Results}
As an attempt to investigate further Alder-type analogues in a fixed perimeter setting, we introduce the parameter $a$ to the functions $h_d(n)$ and $f_d(n)$.  For positive integers $n, a, d$, define
\begin{align} 
h_d^{(a)}(n) & =r(n\mid  \text{parts are d-distinct and } \geq a), \label{hdadef} \\
f_d^{(a)}(n) & = r(n\mid \text{parts are} \equiv a \!\!\!\!\!\pmod{d+1}), \label{fdadef}
\end{align}
and define the refinements
\begin{align*} 
h_d^{(a)}(\alpha,\lambda) & = h_d^{(a)}(\alpha,\lambda,n) \\
& = r(n\mid  \text{parts are $d$-distinct and } \geq a \text{ with largest part } \alpha \text{ and }\lambda \text{ parts}), \\
f_d^{(a)}(\alpha,\lambda) & = f_d^{(a)}(\alpha,\lambda,n) \\
& = r(n\mid  \text{parts are} \equiv a \!\!\!\!\!\pmod{d+1} \text{ with largest part } \alpha \text{ and }\lambda \text{ parts}).
\end{align*}
We further define $\mathcal{H}_d^{(a)}(n)$ to be the set of partitions counted by $h_d^{(a)}(n)$, and $\mathcal{F}_d^{(a)}(n)$ to be the set of partitions counted by $f_d^{(a)}(n)$.

The functions $h_d^{(a)}(n)$ and $f_d^{(a)}(n)$ generalize $h_d(n)$ and $f_d(n)$ in the same way that $q_d^{(a)}(n)$ and $Q_d^{(a)}(n)$ generalize $q_d(n)$ and $Q_d(n)$.  However, $f_d^{(a)}(n)$ is not a direct analogue of $Q_d^{(a)}(n)$.  As we are interested in exploring a more direct analogue of $Q_d^{(a)}(n)$ we are led to make the following definition.  For positive integers $n, a, d$, define 
\begin{equation}\label{ldadef}
\ell_d^{(a)}(n)= r(n\mid \text{parts are} \equiv \pm a \!\!\!\!\!\pmod{d+3}), 
\end{equation}
and the refinement
\begin{align*}
\ell_d^{(a)}(\alpha,\lambda) &= \ell_d^{(a)}(\alpha,\lambda,n) \\
&= r(n\mid  \text{parts are} \equiv \pm a \!\!\!\!\!\pmod{d+3} \text{with largest part } \alpha \text{ and }\lambda \text{ parts}).
\end{align*}
We further define $\mathcal{L}_d^{(a)}(n)$ to be the set of partitions counted by $\ell_d^{(a)}(n)$. \\

Our first result is the following analogue of Theorem \ref{thm:FT}.

\begin{theorem}\label{theorem:had=fad}
    For positive integers $d$, $n$, and $1 \leq a \leq d+1$, $$h_d^{(a)}(n)=f_d^{(a)}(n).$$ Moreover, the number of partitions counted by $h_d^{(a)}(n)$ with $\l$ parts equals the number of partitions counted by $f_d^{(a)}(n)$ with largest part $a+(d+1)(\l-1)$, namely, 
\[
h_d^{(a)}(\a, \l)=f_d^{(a)}(a+(d+1)(\l-1), \a-a-d(\l-1)+1).
\]
\end{theorem}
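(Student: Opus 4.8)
The plan is to prove the refined identity bijectively, by converting each of the two families of partitions into ordinary partitions contained in a fixed rectangular box and then matching the two boxes by conjugation; the unrefined identity $h_d^{(a)}(n)=f_d^{(a)}(n)$ will follow by summing over the parameters. Throughout, fix $d,n,a$ with $1\le a\le d+1$, fix $\lambda\ge 1$, let $\alpha$ be determined by $\alpha+\lambda-1=n$, and set $\alpha'=a+(d+1)(\lambda-1)$ and $\lambda'=\alpha-a-d(\lambda-1)+1$; a direct computation gives $\alpha'+\lambda'-1=\alpha+\lambda-1=n$, so the target perimeter is automatically correct.

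\emph{Step 1 (the $h$-side).} Given a partition $\pi=(\pi_1>\cdots>\pi_\lambda)$ counted by $h_d^{(a)}(\alpha,\lambda)$, so $\pi_i-\pi_{i+1}\ge d$, $\pi_\lambda\ge a$, and $\pi_1=\alpha$, I would define $\tau_i=\pi_i-d(\lambda-i)-(a-1)$. Since $\tau_i-\tau_{i+1}=(\pi_i-\pi_{i+1})-d\ge 0$ and $\tau_\lambda=\pi_\lambda-(a-1)\ge 1$, the sequence $\tau$ is an ordinary partition with exactly $\lambda$ parts and $\tau_1=\alpha-d(\lambda-1)-(a-1)=\lambda'$. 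This map is invertible (add back the staircase $d(\lambda-i)$ and the shift $a-1$), and the defining conditions match exactly: $d$-distinctness corresponds to weak decrease, having all parts $\ge a$ corresponds to $\tau_\lambda\ge 1$, and having largest part $\alpha$ corresponds to having largest part $\lambda'$. Hence $h_d^{(a)}(\alpha,\lambda)$ equals the number of ordinary partitions with exactly $\lambda$ parts and largest part exactly $\lambda'$; deleting the hook formed by the first row and first column identifies this set with the sub-partitions of a $(\lambda-1)\times(\lambda'-1)$ box.

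\emph{Step 2 (the $f$-side) and assembly.} A partition $\rho=(\rho_1\ge\cdots\ge\rho_{\lambda'})$ counted by $f_d^{(a)}(\alpha',\lambda')$ has every part $\equiv a\pmod{d+1}$, hence (since $1\le a\le d+1$) every part $\ge a$, and $\rho_1=\alpha'$; writing $\rho_j=a+(d+1)c_j$ gives $c_1\ge\cdots\ge c_{\lambda'}\ge 0$ with $c_1=\lambda-1$. I claim the assignment $\tau\mapsto\rho$ with $\rho_j:=a+(d+1)(\tau'_j-1)$, where $\tau'$ is the conjugate partition, is a bijection from the set of ordinary partitions with exactly $\lambda$ parts and largest part exactly $\lambda'$ onto the set of partitions counted by $f_d^{(a)}(\alpha',\lambda')$: conjugation interchanges ``number of parts'' and ``largest part'', so $\tau$ having largest part $\lambda'$ becomes $\tau'$ having exactly $\lambda'$ parts, hence $\rho$ having exactly $\lambda'$ parts, and $\tau$ having exactly $\lambda$ parts becomes $\tau'_1=\lambda$, hence $\rho_1=\alpha'$, while the parts of $\tau'$, which lie in $\{1,\dots,\lambda\}$, are rescaled exactly to the admissible part-values $a,a+(d+1),\dots,a+(d+1)(\lambda-1)$. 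Composing Steps 1 and 2 gives $h_d^{(a)}(\alpha,\lambda)=f_d^{(a)}(\alpha',\lambda')$. For the unrefined statement, note that an $h_d^{(a)}(n)$-partition with $\lambda$ parts exists precisely when $1\le\lambda\le\frac{n-a+d+1}{d+1}$, and under $\lambda\mapsto\alpha'=a+(d+1)(\lambda-1)$ this range maps bijectively onto the set of integers $\equiv a\pmod{d+1}$ lying in $[a,n]$, which are exactly the admissible largest parts of $f_d^{(a)}(n)$-partitions, with $\lambda'=n+1-\alpha'$ then forced by the perimeter; summing the refined equalities over this common index set yields $h_d^{(a)}(n)=f_d^{(a)}(n)$.

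I expect the bijections themselves to cause no trouble, being the elementary staircase-subtraction and conjugation maps; the step needing care is the bookkeeping of the ``exactly $k$ parts / largest part exactly $m$'' conditions (including the degenerate cases $\lambda=1$ and $a=d+1$) and the verification that the admissibility range for $\lambda$ on the $h$-side literally coincides with the admissibility range for $\alpha'$ on the $f$-side, which is what makes the final summation step clean. As a consistency check, $d=1$, $a=1$ recovers Theorem~\ref{thm:FT}: Step 1 shows the $h_1(n)$-partitions with $\lambda$ parts are counted by the sub-partitions of a $(\lambda-1)\times(n-2\lambda+1)$ box, namely by $\binom{n-\lambda}{\lambda-1}$, and they correspond to the $f_1(n)$-partitions with largest part $2\lambda-1$.
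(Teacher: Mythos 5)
Your proposal is correct, but it proves the theorem by a genuinely different route than the paper. The paper works analytically: it derives the two-variable generating functions \eqref{Hda} and \eqref{Fda} by the $E$/$N$ lattice-word encoding, gets the unrefined identity $h_d^{(a)}(n)=f_d^{(a)}(n)$ at once by setting $x=y=1$, and then proves the duality by extracting explicit binomial formulas (Lemmas \ref{lem:Hbinom} and \ref{lem:Fbinom}, namely $h_d^{(a)}(\a,\l)=\binom{\a-a-(d-1)(\l-1)}{\l-1}$ and $f_d^{(a)}(\a,\l)=\binom{\frac{\a-a}{d+1}+\l-1}{\l-1}$) and matching them via the symmetry $\binom{A}{B}=\binom{A}{A-B}$. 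You instead construct an explicit bijection: staircase subtraction $\pi_i\mapsto\pi_i-d(\l-i)-(a-1)$ turns the $d$-distinct, parts-$\geq a$ condition into an ordinary partition with exactly $\l$ parts and largest part exactly $\l'$, the hook-removal identifies these with partitions in a $(\l-1)\times(\l'-1)$ box, and conjugation followed by the rescaling $\tau'_j\mapsto a+(d+1)(\tau'_j-1)$ lands exactly on the partitions counted by $f_d^{(a)}(\a',\l')$ (the hypothesis $1\leq a\leq d+1$ is used, as you note, to guarantee that parts $\equiv a \pmod{d+1}$ are automatically $\geq a$); the unrefined identity then follows by checking that the admissible ranges of $\l$ and $\a'$ correspond, rather than from a generating-function evaluation. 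Both arguments are complete; yours is combinatorially more transparent (it exhibits conjugation as the source of the duality, recovers the box count $\binom{\l+\l'-2}{\l-1}$, and specializes immediately to Theorem \ref{thm:FT}), while the paper's computation produces the closed binomial formulas of Lemmas \ref{lem:Hbinom} and \ref{lem:Fbinom} in a form that is reused later in the proofs of Theorem \ref{ReverseAlder} and Proposition \ref{IneqChains}, so the analytic detour is not wasted there. The only bookkeeping your write-up leaves implicit is the degenerate situation $\a<a+d(\l-1)$ (equivalently $\l'\leq 0$), where both sides of the refined identity vanish trivially; you flag this and it causes no difficulty.
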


Our next result is a reverse Alder-type analogue which more closely aligns with the $q_d(n)$ and $Q_d(n)$ functions but with the inequality reversed.

\begin{theorem}\label{ReverseAlder}
    For positive integers $d$, $n$, and $a < \frac{d+3}{2}$, $$h_d^{(a)}(n) \leq \ell_d^{(a)}(n).$$
\end{theorem}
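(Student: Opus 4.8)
The plan is to deduce the inequality from Theorem~\ref{theorem:had=fad} together with an explicit injection between the two families of fixed-perimeter partitions. Since Theorem~\ref{theorem:had=fad} gives $h_d^{(a)}(n)=f_d^{(a)}(n)$, it suffices to construct a one-to-one map $\phi\colon \mathcal{F}_d^{(a)}(n)\to\mathcal{L}_d^{(a)}(n)$. Write $S_F=\{a+k(d+1): k\ge 0\}$ for the set of allowable parts of a partition counted by $f_d^{(a)}(n)$, so that its $k$-th smallest element is $s^F_k=a+(k-1)(d+1)$, and write $S_L=\{p\ge 1: p\equiv \pm a \pmod{d+3}\}$ for the allowable parts of a partition counted by $\ell_d^{(a)}(n)$. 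The hypothesis $a<\frac{d+3}{2}$ ensures $0<2a<d+3$, so $a$ and $-a$ are distinct residues modulo $d+3$ and $a$ is the least element of $S_L$; hence listing $S_L$ in increasing order gives $s^L_1=a$ and, for $j\ge 1$, $s^L_{2j}=j(d+3)-a$ and $s^L_{2j+1}=j(d+3)+a$.

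The combinatorial heart of the argument is the elementary estimate $s^L_k\le s^F_k$ for all $k\ge 1$. I would prove this by parity: for $k=2j+1$ one has $s^F_k-s^L_k=j(d-1)$, and for $k=2j$ one has $s^F_k-s^L_k=(2a-d-1)+j(d-1)\ge 2a-2$, both nonnegative because $a\ge 1$ and $d\ge 1$. (Thus $a,d\ge 1$ enter here, while $a<\frac{d+3}{2}$ was used only to pin down the shape of $S_L$, in particular that $a\le d+1$ so Theorem~\ref{theorem:had=fad} applies.)

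Now define $\phi$. Given $\pi\in\mathcal{F}_d^{(a)}(n)$, let $\alpha=s^F_K$ be its largest part and, for $1\le j\le K$, let $m_j\ge 0$ be the number of parts of $\pi$ equal to $s^F_j$; thus $m_K\ge 1$, $\pi$ has $\lambda=\sum_{j=1}^{K}m_j$ parts, and the perimeter condition reads $\alpha+\lambda-1=n$. Let $\phi(\pi)=\sigma$ be the partition having $m_j$ parts equal to $s^L_j$ for $1\le j\le K$, together with $\alpha-s^L_K$ additional parts equal to $a$. By the estimate above $\alpha-s^L_K\ge 0$, so $\sigma$ is well defined; all its parts lie in $S_L$; its largest part is $s^L_K$ (attained since $m_K\ge 1$, and maximal since $a=s^L_1<\cdots<s^L_K$); and it has $\lambda+(\alpha-s^L_K)$ parts, so its perimeter equals $s^L_K+\lambda+(\alpha-s^L_K)-1=\alpha+\lambda-1=n$. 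Hence $\sigma\in\mathcal{L}_d^{(a)}(n)$.

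Finally I would verify that $\phi$ is injective, which is the step to be most careful about. From $\sigma$ one recovers $s^L_K$ as the largest part, hence $K$ (since $k\mapsto s^L_k$ is strictly increasing), hence the number $\alpha-s^L_K$ of artificially inserted $a$'s; then $m_j$ is the multiplicity of $s^L_j$ in $\sigma$ for $2\le j\le K$, and $m_1$ is the multiplicity of $a$ in $\sigma$ minus $\alpha-s^L_K$. These multiplicities reconstruct $\pi$ uniquely, so $\phi$ is injective, giving $f_d^{(a)}(n)\le\ell_d^{(a)}(n)$ and therefore $h_d^{(a)}(n)\le\ell_d^{(a)}(n)$. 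The delicate point is precisely that the inserted copies of $a$ are indistinguishable in $\sigma$ from copies of $a=s^L_1$ already present in $\pi$: injectivity survives only because their number is a function of $K$ alone, which $\sigma$ determines. As a consistency check, when $d=1$ and $a=1$ one has $S_F=S_L=\{1,3,5,\dots\}$ and $\phi$ is the identity, recovering the equality in Straub's Theorem~\ref{thm:Straub}; an alternative route I would keep in reserve is to compute $\sum_{n}\ell_d^{(a)}(n)t^n=\frac{t^a(1-t)+t^{d+3-a}}{(1-t)^2-t^{d+3}}$ and $\sum_n h_d^{(a)}(n)t^n=\frac{t^a}{1-t-t^{d+1}}$ and show the difference has nonnegative coefficients, although that computation splits into more cases than the bijection.
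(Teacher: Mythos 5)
Your proposal is correct, and it takes a genuinely different route from the paper. The paper makes the same first reduction (since $a<\frac{d+3}{2}\leq d+1$, Theorem \ref{theorem:had=fad} lets one replace $h_d^{(a)}(n)$ by $f_d^{(a)}(n)$), but then argues analytically: it refines both counts over arm lengths as in \eqref{f_refinement} and \eqref{ell_refinement}, evaluates each refined count as a binomial coefficient (Lemmas \ref{lem:Fbinom} and \ref{prop:Lbincof}), splits the $f$-sum by parity of the summation index, verifies via the floor-function estimates of Lemma \ref{lem:fell_ineq} that it has no more terms than \eqref{ell_binom}, and compares summands termwise using \eqref{binom_ineq}. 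You instead build an explicit injection $\mathcal{F}_d^{(a)}(n)\hookrightarrow\mathcal{L}_d^{(a)}(n)$: replace the $k$-th smallest allowable part $s^F_k=a+(k-1)(d+1)$ by the $k$-th smallest element $s^L_k$ of $\{x\geq 1: x\equiv\pm a \pmod{d+3}\}$, and pad with $s^F_K-s^L_K$ parts equal to $a$ to restore the perimeter. I checked the key points: the ordering of $S_L$ (this is exactly where $2a<d+3$ is needed), the parity computation giving $s^L_k\leq s^F_k$, the perimeter bookkeeping, and the recovery from $\sigma$ of $K$, hence of the padding count and all multiplicities, so injectivity does hold; your remark that the inserted copies of $a$ are harmless because their number depends only on $K$ is precisely the right point to flag. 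Note that your map preserves the index of the largest part, so for each $K$ it injects the $f$-partitions with arm $s^F_K$ into the $\ell$-partitions with arm $s^L_K$; this is a combinatorial (injective) proof of exactly the termwise binomial inequalities the paper obtains from \eqref{binom_ineq}, with Lemma \ref{lem:fell_ineq} becoming unnecessary because the surplus arms on the $\ell$-side are simply not hit. What each approach buys: yours is more conceptual, explains when equality holds (e.g.\ $d=1$, $a=1$, where the map is the identity, recovering Theorem \ref{thm:Straub}), and avoids the floor-function casework; the paper's yields explicit closed-form refinements such as \eqref{ell_binom}, which it reuses elsewhere (notably in the proof of Proposition \ref{IneqChains}).
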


Lastly, we observe some shift inequalities for $h_d^{(a)}(n)$ and $\ell_d^{(a)}(n)$.  

\begin{proposition}\label{IneqChains}
For positive integers $d$, $n$, and $a$,
\begin{align*}   
h_d^{(a+1)}(n) & \leq h_d^{(a)}(n), \\
h_{d+1}^{(a)}(n) & \leq h_d^{(a)}(n), \\
h_d^{(a)}(n) & \leq h_d^{(a)}(n+1),
\end{align*}
and for positive integers $d$, $n$, and $a < \frac{d+3}{2}$,
\begin{align*}
\ell_d^{(a)}(n) &\leq \ell_d^{(a)}(n+1), \\
\ell_{d+1}^{(a)}(n) &\leq \ell_d^{(a)}(n).
\end{align*}
\end{proposition}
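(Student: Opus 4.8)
The plan is to prove each line of the proposition by exhibiting an explicit map between the relevant sets of fixed-perimeter partitions; four of the five are short, and the inequality $\ell_{d+1}^{(a)}(n)\leq\ell_d^{(a)}(n)$ is the substantive one. The two ``parameter-shift'' inequalities for $h$ are immediate containments: every $d$-distinct partition with all parts $\geq a+1$ is in particular a $d$-distinct partition with all parts $\geq a$, and every $(d+1)$-distinct partition is $d$-distinct, so $\mathcal{H}_d^{(a+1)}(n)\subseteq\mathcal{H}_d^{(a)}(n)$ and $\mathcal{H}_{d+1}^{(a)}(n)\subseteq\mathcal{H}_d^{(a)}(n)$. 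For $h_d^{(a)}(n)\leq h_d^{(a)}(n+1)$ I would send $(\pi_1,\dots,\pi_\lambda)\mapsto(\pi_1+1,\pi_2,\dots,\pi_\lambda)$: adding $1$ to the largest part only widens the top gap, so $d$-distinctness and the bound $\geq a$ are preserved, the number of parts is unchanged, and the perimeter $\alpha+\lambda-1$ rises by exactly $1$; injectivity is clear. For $\ell_d^{(a)}(n)\leq\ell_d^{(a)}(n+1)$ the hypothesis $a<\tfrac{d+3}{2}$ makes $a$ the least element of $S:=\{m\geq1:\ m\equiv\pm a\pmod{d+3}\}$, so adjoining a new part equal to $a$ to a partition in $\mathcal{L}_d^{(a)}(n)$ yields a partition still supported on $S$, still nonincreasing, with the same largest part and one more part; deleting a part equal to $a$ inverts this.

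For the remaining inequality $\ell_{d+1}^{(a)}(n)\leq\ell_d^{(a)}(n)$ I would set up an index-shifting injection. List $S=\{m\geq1:\ m\equiv\pm a\pmod{d+3}\}$ in increasing order as $s_1<s_2<\cdots$ and $S':=\{m\geq1:\ m\equiv\pm a\pmod{d+4}\}$ as $t_1<t_2<\cdots$. Since $a<\tfrac{d+3}{2}<\tfrac{d+4}{2}$, one has $s_{2k-1}=a+(k-1)(d+3)$ and $s_{2k}=k(d+3)-a$ for $k\geq1$, and the analogous formulas for $t_j$ with $d+3$ replaced by $d+4$; consequently $t_j=s_j+\lfloor j/2\rfloor$ for every $j\geq1$. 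Given $\pi\in\mathcal{L}_{d+1}^{(a)}(n)$, write its parts with multiplicity as $t_{j_1}\geq t_{j_2}\geq\cdots\geq t_{j_\lambda}$ (so $j_1\geq j_2\geq\cdots\geq j_\lambda\geq1$), and let $\Phi(\pi)$ be the partition whose multiset of parts is $\{s_{j_1},\dots,s_{j_\lambda}\}$ together with $\lfloor j_1/2\rfloor$ additional copies of $a$.

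I then claim $\Phi$ is a well-defined injection $\mathcal{L}_{d+1}^{(a)}(n)\to\mathcal{L}_d^{(a)}(n)$. It is well defined because $\Phi(\pi)$ is supported on $S$, has largest part $s_{j_1}$ (as $a=s_1\leq s_{j_i}\leq s_{j_1}$ for all $i$) and $\lambda+\lfloor j_1/2\rfloor$ parts, so its perimeter is $s_{j_1}+\lfloor j_1/2\rfloor+\lambda-1=t_{j_1}+\lambda-1=\Gamma(\pi)=n$. It is injective because the largest part of $\Phi(\pi)$ is $s_{j_1}$, which determines $j_1$ and hence $\lfloor j_1/2\rfloor$; removing that many copies of $a$ from $\Phi(\pi)$ recovers the multiset $\{s_{j_1},\dots,s_{j_\lambda}\}$, and replacing each $s_{j_i}$ by the corresponding $t_{j_i}$ recovers $\pi$. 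This yields $\ell_{d+1}^{(a)}(n)=|\mathcal{L}_{d+1}^{(a)}(n)|\leq|\mathcal{L}_d^{(a)}(n)|=\ell_d^{(a)}(n)$, as desired.

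The only step requiring insight is finding $\Phi$, i.e., noticing that the defect $t_j-s_j$ between the two residue-class supports is exactly $\lfloor j/2\rfloor$ and can be compensated by appending $\lfloor j_1/2\rfloor$ minimal parts while holding the perimeter fixed; the rest is routine verification. A less transparent but fully mechanical alternative is to compute the perimeter generating function $\sum_{n\geq1}\ell_d^{(a)}(n)q^n=\dfrac{q^a-q^{a+1}+q^{d+3-a}}{(1-q)^2-q^{d+3}}$ and to check that $\sum_{n\geq1}\bigl(\ell_d^{(a)}(n)-\ell_{d+1}^{(a)}(n)\bigr)q^n$ collapses to $\dfrac{(1-q)^2\,q^{d+3-a}\,(1-q+q^{2a})}{\bigl((1-q)^2-q^{d+3}\bigr)\bigl((1-q)^2-q^{d+4}\bigr)}$, which is a product of power series with nonnegative coefficients once the factor $1-q+q^{2a}$ is regrouped with a $\tfrac{1}{1-q}$; I would present the combinatorial argument as the main proof and keep this as a remark.
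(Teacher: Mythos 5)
Your proof is correct. For the three $h$-inequalities and for $\ell_d^{(a)}(n) \leq \ell_d^{(a)}(n+1)$ you argue exactly as the paper does: the first two are set containments of the partitions counted, the third adds $1$ to the largest part, and the fourth appends a part of size $a$, which raises the perimeter by exactly $1$. The place where you genuinely diverge is $\ell_{d+1}^{(a)}(n) \leq \ell_d^{(a)}(n)$: the paper proves it analytically, invoking the refinement formula of Lemma \ref{prop:Lbincof} to write both quantities as the pairs of binomial-coefficient sums in \eqref{ell_binom} (with moduli $d+3$ and $d+4$), observing that the $d+1$ sums have no more terms, and comparing summands pairwise via \eqref{binom_ineq}; you instead construct an explicit injection $\Phi:\mathcal{L}_{d+1}^{(a)}(n)\to\mathcal{L}_d^{(a)}(n)$ based on the identity $t_j=s_j+\lfloor j/2\rfloor$ between the $j$-th smallest allowable parts for the two moduli, compensating the shrinkage by appending $\lfloor j_1/2\rfloor$ copies of $a$ so that the perimeter $s_{j_1}+\lambda+\lfloor j_1/2\rfloor-1=t_{j_1}+\lambda-1=n$ is preserved, and recovering $j_1$ from the largest part of the image to get injectivity. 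Your index formulas and the perimeter bookkeeping check out, and the hypothesis $a<\frac{d+3}{2}$ is used correctly (it makes both lists strictly increasing with least element $a$). The trade-off: your argument is self-contained and gives a combinatorial explanation of the inequality, independent of Lemma \ref{prop:Lbincof}; the paper's argument is shorter in context because the binomial refinement machinery is already in place from the proof of Theorem \ref{ReverseAlder}, and it treats this inequality uniformly with that proof. Your closing generating-function alternative is consistent with small cases but is not fully verified as written, so presenting the injection as the main proof and relegating that computation to a remark is the right call.
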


We note that the inequalities for $h_d^{(a)}(n)$ given in Proposition \ref{IneqChains} are direct analogues of properties of $q_d^{(a)}(n)$.  The fact that $h_d^{(a)}(n)=f_d^{(a)}(n)$ extends these results to $f_d^{(a)}(n)$ as well.  However, corresponding inequalities\footnote{A result of Xia \cite[Thm. 1]{Xia} does give that $Q_d^{(1)}(n)\geq Q_d^{(2)}(n)$ however.} do not hold for $Q_d^{(a)}(n)$ and in fact can all fail at the same time (for example when $d=6$, $a=2$, $n=18$).  Thus the inequalities given for $\ell_d^{(a)}(n)$ in Proposition \ref{IneqChains} are unique to the fixed perimeter setting.


We now outline the rest of the paper.  In Section \ref{genfunctionssection}, we extend Fu and Tang's findings to include the parameter $a$ and establish generating functions for $h_d^{(a)}(n)$ and $f_d^{(a)}(n)$. In Sections \ref{sec:duality} and \ref{reversealdersection} we prove Theorems \ref{theorem:had=fad} and \ref{ReverseAlder}, respectively.  In Section \ref{ineqchainssection} we prove Proposition \ref{IneqChains}.  Then, in Section \ref{furtherqssection}, we conclude with a brief discussion on some remaining questions. 

\section{Generating Functions}\label{genfunctionssection}

Observe that for any partition of perimeter $n$, its largest part $\alpha$ (or colloquially, its \emph{arm length}), its number of parts $\lambda$ (its \emph{leg length}), and perimeter $n$ are related by 
$$n=\alpha+\lambda-1,$$
and thus any two of these parameters will determine the other.  We will switch between parameter variables for convenience, i.e., rewriting $h_d^{(a)}(\alpha,\lambda,\alpha+\lambda-1)=h_d^{(a)}(n-\lambda+1,\lambda,n).$

For any positive integer $d$, Fu and Tang \cite[Thm. 2.15]{FuTang} obtained the following generating functions for $h_d^{(1)}(\alpha,\lambda)$ and $f_d^{(1)}(\alpha, \lambda)$, 
\begin{align}
 H_d^{(1)}(x,y,q) & =\sum_{\alpha=0}^{\infty}\sum_{\lambda=0}^{\infty}h_d^{(1)}(\alpha,\lambda)x^\alpha y^\lambda q^{\alpha + \lambda -1}=\frac{xyq}{1-(xq+x^dyq^{d+1})}, \label{Hd1}  \\
F_d^{(1)}(x,y,q) & =\sum_{\alpha=0}^{\infty}\sum_{\lambda =0}^{\infty}f_d^{(1)}(\alpha, \lambda)x^{\alpha} y^{\lambda}q^{\alpha + \lambda -1}=\frac{xyq}{1-(yq+x^{d+1}q^{d+1})}.      \label{Fd1}
\end{align}

We begin by reviewing their proof of this result.  Associate to each partition a word in $\{E,N\}$ corresponding to its Ferrers diagram in the following way.  Starting at the lower left corner, move along the lower and right outer boundary of the Ferrers diagram until arriving at the upper right corner.  For each step, catalog a move to the right with an $E$ (east) and a move up with an $N$ (north).  For example, the partition $2+2+1$   
\[
\begin{matrix}
        \bullet  & \bullet \\
        \bullet  & \bullet \\
        \bullet & 
\end{matrix}
\]
is interpreted as the word $ENENN$.

As $x$ is keeping track of the size of the largest part, each $E$ will contribute an $x$.  Similarly, $y$ is keeping track of the number of parts, so each $N$ will contribute a $y$.  We have $q$ keeping track of the perimeter $\alpha + \lambda -1$.  Since every partition's word must start with an $E$ and end with an $N$, which together will account for only one square of the perimeter, each generating function will have an $xyq$ in the numerator.  For the rest of the perimeter, each $E$ and $N$ will each contribute one $q$.  Together, we have that the first and last $E,N$ contribute $xyq$, whereas each intermediate $E$ contributes a $xq$, and each intermediate $N$ contributes a $yq$. 

When restricting to $d$-distinct parts, $E$ movements are unrestricted, but any $N$ must be followed by at least $d$ copies of $E$.  Thus for each intermediate character we may choose either $xq$ or $x^dyq^{d+1}$, which yields \eqref{Hd1}.  Similarly, when restricting to parts congruent to $1$ modulo $d+1$, it is $N$ movements that are unrestricted, and any $E$ movements after the first must be done in increments of $d+1$.  Thus at each intermediate character we may choose either $yq$ or $x^{d+1}q^{d+1}$, which yields \eqref{Fd1}.

Both of these proofs for \eqref{Hd1} and \eqref{Fd1} can be easily modified to account for $a$ as in \eqref{hdadef} and \eqref{fdadef}.  Namely, in each case, whether parts are $\geq a$ or parts are $\equiv a \pmod{d+1}$, we must start with $a$ copies of $E$ and conclude with an $N$, which contributes $x^ayq^a$ in the numerator instead of $xyq$.  Then the choices for each intermediate character are the same as in the $a=1$ case.  From this we obtain the following generating functions for $h_d^{(a)}(\alpha,\lambda)$ and $f_d^{(a)}(\alpha, \lambda)$,

\begin{align}
H_d^{(a)}(x,y,q) & =\sum_{\alpha=0}^{\infty}\sum_{\lambda=0}^{\infty}h_d^{(a)}(\alpha,\lambda)x^\alpha y^\lambda q^{\alpha + \lambda -1}=\frac{x^ayq^a}{1-(xq+x^dyq^{d+1})}, \label{Hda} \\
F_d^{(a)}(x,y,q) & =\sum_{\alpha=0}^{\infty}\sum_{\lambda =0}^{\infty}f_d^{(a)}(\alpha, \lambda)x^{\alpha} y^{\lambda}q^{\alpha + \lambda -1}=\frac{x^ayq^a}{1-(yq+x^{d+1}q^{d+1})}.      \label{Fda}
\end{align}

We observe that setting $x=y=1$ in \eqref{Hda} and \eqref{Fda} immediately shows that
\[
\sum_{n=1}^\infty h_d^{(a)}(n)q^n=\frac{q^a}{1-(q+q^d)}=\sum_{n=1}^\infty f_d^{(a)}(n)q^n,
\]
which gives 
\begin{equation} \label{hd=fd}
h_d^{(a)}(n)=f_d^{(a)}(n),
\end{equation} 
the first statement in Theorem \ref{theorem:had=fad}.

\section{Refinement Formulas and Duality for $h_d^{(a)}(n)$ and $f_d^{(a)}(n)$} \label{sec:duality}

In this section we will prove Theorem \ref{theorem:had=fad}.  Expanding the generating functions \eqref{Hda} and \eqref{Fda} will lead us to refinement formulas for fixed arm and leg lengths in terms of binomial coefficients.  The following lemmas will be used in the proof of the duality part of Theorem \ref{theorem:had=fad}. 

\begin{lemma}\label{lem:Hbinom} 
Fix positive integers $d$, $a$, $\a$, and $\l$ such that $a \leq d+1$.  Then $h_d^{(a)}(\a,\l)$ is nonzero if and only if $\alpha \geq a+d(\l-1)$, and in that case,
\[
h_d^{(a)}(\a,\l)  = \binom{\a -a-(d-1)(\l-1)}{\l-1}. 
\]
\end{lemma}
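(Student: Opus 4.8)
The plan is to extract $h_d^{(a)}(\alpha,\lambda)$ as a coefficient in the generating function \eqref{Hda} and evaluate it directly. Writing
\[
H_d^{(a)}(x,y,q)=\frac{x^a y q^a}{1-(xq+x^d y q^{d+1})}=x^a y q^a\sum_{k=0}^{\infty}\bigl(xq+x^d y q^{d+1}\bigr)^k,
\]
I would expand the $k$-th power by the binomial theorem: $(xq+x^dyq^{d+1})^k=\sum_{j=0}^{k}\binom{k}{j}(xq)^{k-j}(x^dyq^{d+1})^j=\sum_j\binom{k}{j}x^{k-j+dj}y^jq^{k-j+(d+1)j}$. Multiplying by the prefactor $x^ayq^a$, the term indexed by $(k,j)$ contributes to $x^\alpha y^\lambda q^{\alpha+\lambda-1}$ precisely when $\lambda=j+1$ and $\alpha=a+k+(d-1)j$, equivalently $k=\alpha-a-(d-1)(\lambda-1)$ and $j=\lambda-1$. (The $q$-exponent $a+k+(d+1)j=\alpha+\lambda-1$ is then automatic, as it must be since the perimeter is determined by $\alpha$ and $\lambda$; this is a consistency check rather than a new constraint.)

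Consequently $h_d^{(a)}(\alpha,\lambda)=\binom{k}{j}=\binom{\alpha-a-(d-1)(\lambda-1)}{\lambda-1}$, which is exactly the claimed formula. For the nonvanishing statement, recall the binomial coefficient $\binom{m}{\lambda-1}$ with $\lambda\ge 1$ is nonzero iff $m\ge \lambda-1$ (and $m$ a nonnegative integer, which here is automatic once $m\ge 0$); so $h_d^{(a)}(\alpha,\lambda)\ne 0$ iff $\alpha-a-(d-1)(\lambda-1)\ge \lambda-1$, i.e. $\alpha\ge a+(d-1)(\lambda-1)+(\lambda-1)=a+d(\lambda-1)$, as stated. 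I should also note that the index $k$ in the sum ranges over nonnegative integers and $0\le j\le k$, so when $\alpha\ge a+d(\lambda-1)$ we indeed have $k\ge j\ge 0$ and the coefficient picked out is legitimate; when $\alpha< a+d(\lambda-1)$ either $k<0$ (no such term) or $k<j$ (coefficient $\binom{k}{j}=0$), consistent with $h_d^{(a)}(\alpha,\lambda)=0$.

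There is no serious obstacle here — the proof is a direct coefficient extraction. The one point requiring a little care is bookkeeping the three exponent equations ($x$, $y$, and $q$) and verifying that the $q$-equation is implied by the other two together with the perimeter relation $n=\alpha+\lambda-1$, so that no spurious restriction is introduced; handling the boundary/degenerate cases (small $\alpha$, or $\lambda=1$ where the formula gives $\binom{\alpha-a}{0}=1$ exactly when $\alpha\ge a$) cleanly is the only other thing to watch. I would present the computation compactly, emphasizing the substitution $(k,j)=(\alpha-a-(d-1)(\lambda-1),\,\lambda-1)$ and then reading off both the formula and the support condition from properties of $\binom{\cdot}{\cdot}$.
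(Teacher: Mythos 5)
Your proposal is correct and follows essentially the same route as the paper: both expand the generating function \eqref{Hda} by geometric series and the binomial theorem, extract the coefficient of $x^\alpha y^\lambda q^{\alpha+\lambda-1}$, and read off the nonvanishing condition $\alpha \geq a+d(\lambda-1)$ from the requirement that the top entry of the binomial coefficient be at least $\lambda-1$. The only difference is cosmetic indexing of the double sum, so no further comparison is needed.
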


\begin{proof}
Expanding \eqref{Hda} via geometric series and the binomial theorem, we obtain
\begin{equation} \label{Hexp}
\frac{x^ayq^a}{1-(xq+x^dyq^{d+1})} = x^ayq^a \sum_{j=0}^\infty (xq+x^dyq^{d+1})^j =  \sum_{j=0}^\infty \sum_{i=0}^j \binom{j}{i} x^{(d-1)i+j+a} y^{i+1} q^{di+j+a}.
\end{equation}
Given fixed $\alpha$, $\lambda$ we wish to find the coefficient of $x^\alpha y^\lambda q^{\alpha+\lambda-1}$.  The coefficient of $y^\l$ in \eqref{Hexp} is 
\[
\sum_{j=\l-1}^\infty \binom{j}{\l-1} x^{(d-1)(\l-1)+j+a} q^{d(\l-1)+j+a}.
\]
Thus when $j=\a-a-(d-1)(\l-1)$, we get the desired coefficient of $x^\alpha y^\lambda q^{\alpha+\lambda-1}$ in \eqref{Hexp}, namely, 
\[
\binom{\a -a-(d-1)(\l-1)}{\l-1}.
\]
We note that the hypothesis $\alpha \geq a+d(\l-1)$ is equivalent to the requirement that $j=\a-a-(d-1)(\l-1)\geq \l-1$.
\end{proof}

We next use a similar analysis on $f_d^{(a)}(\a,\l)$.

\begin{lemma}\label{lem:Fbinom} 
Fix positive integers $d$, $a$, $\a$, and $\l$ such that $a \leq d+1$.  Then $f_d^{(a)}(\a,\l)$ is nonzero if and only if $\alpha \geq a$ and $\alpha\equiv a \pmod{d+1}$, and in that case,
\[
f_d^{(a)}(\a,\l)  =\binom{\frac{\a-a}{d+1}+\l-1}{\l-1}.
\]
\end{lemma}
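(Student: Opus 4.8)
The plan is to mirror the proof of Lemma \ref{lem:Hbinom} almost verbatim, replacing the generating function \eqref{Hda} by \eqref{Fda}. First I would expand
\[
F_d^{(a)}(x,y,q) = \frac{x^ayq^a}{1-(yq+x^{d+1}q^{d+1})} = x^ayq^a\sum_{j=0}^\infty (yq+x^{d+1}q^{d+1})^j
\]
via the geometric series and then apply the binomial theorem to $(yq+x^{d+1}q^{d+1})^j$. Collecting exponents, the term indexed by $(j,i)$ with $0\le i\le j$ contributes a monomial of the form $x^{(d+1)i+a}\, y^{(j-i)+1}\, q^{(d+1)i+j+a}$. So I would write out the analogue of \eqref{Hexp}, namely a double sum $\sum_{j\ge 0}\sum_{i=0}^j \binom{j}{i} x^{(d+1)i+a} y^{j-i+1} q^{(d+1)i+j+a}$.

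Next I would extract the coefficient of $x^\a y^\l q^{\a+\l-1}$. From the $x$-exponent $(d+1)i+a=\a$ we read off that a nonzero contribution forces $\a\equiv a\pmod{d+1}$ and $\a\ge a$, with $i=\frac{\a-a}{d+1}$; this is exactly the stated nonvanishing condition. From the $y$-exponent $j-i+1=\l$ we get $j = i+\l-1 = \frac{\a-a}{d+1}+\l-1$, and one should check that the $q$-exponent $(d+1)i+j+a = \a + (\l-1) = \a+\l-1$ is then automatically correct, so no further constraint arises. The coefficient is therefore $\binom{j}{i} = \binom{\frac{\a-a}{d+1}+\l-1}{\l-1}$, as claimed; note the binomial is automatically well-defined since $j\ge i\ge 0$ whenever $\l\ge 1$.

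I expect there to be essentially no obstacle here: the computation is a routine geometric-series-plus-binomial-theorem expansion of the same flavor as Lemma \ref{lem:Hbinom}, and the only point requiring a sentence of care is the bookkeeping that matches $x$-, $y$-, and $q$-exponents simultaneously and confirms that the $q$-exponent constraint is redundant. One could alternatively give a direct combinatorial argument — a partition counted by $f_d^{(a)}(\a,\l)$ has largest part $\a$ (so $\a\equiv a\pmod{d+1}$, $\a\ge a$), and its remaining $\l-1$ parts are each congruent to $a$ mod $d+1$ and at most $\a$, i.e. each lies in $\{a, a+(d+1), \ldots, \a\}$, a set of size $\frac{\a-a}{d+1}+1$; choosing a multiset of size $\l-1$ from $\frac{\a-a}{d+1}+1$ values gives $\binom{\frac{\a-a}{d+1}+1+(\l-1)-1}{\l-1} = \binom{\frac{\a-a}{d+1}+\l-1}{\l-1}$ — but since the paper's style is generating-function based and Lemma \ref{lem:Hbinom} was proved that way, I would keep the generating function proof for consistency.
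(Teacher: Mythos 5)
Your proposal is correct and follows essentially the same route as the paper's proof: expand $F_d^{(a)}(x,y,q)$ by the geometric series and the binomial theorem and read off the coefficient of $x^\alpha y^\lambda q^{\alpha+\lambda-1}$ (the paper indexes the binomial expansion by the number of $yq$ factors rather than the number of $x^{d+1}q^{d+1}$ factors, which is just a relabeling $i \leftrightarrow j-i$), and your alternative stars-and-bars argument is exactly the one the paper itself records in the remark following Lemma \ref{prop:Lbincof}. One small slip to fix: the $q$-exponent of the $(j,i)$ term is $di+j+a$, not $(d+1)i+j+a$, since $(yq)^{j-i}(x^{d+1}q^{d+1})^i$ contributes $q^{(j-i)+(d+1)i}=q^{j+di}$; as written, your consistency check $(d+1)i+j+a=\alpha+\lambda-1$ is off by $i$, but with the corrected exponent one gets $di+j+a=(d+1)i+a+\lambda-1=\alpha+\lambda-1$, so the constraint is indeed redundant and the coefficient $\binom{j}{i}=\binom{\frac{\alpha-a}{d+1}+\lambda-1}{\lambda-1}$ follows as you claim.
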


\begin{proof}
Expanding \eqref{Fda} via geometric series and the binomial theorem gives
\begin{align} \label{Fexp}
\frac{x^ayq^a}{1-(yq+x^{d+1}q^{d+1})} &=  x^ayq^a \sum_{j=0}^\infty (yq+x^{d+1}q^{d+1})^j \\
&=  \sum_{j=0}^\infty \sum_{i=0}^j \binom{j}{i} x^{(d+1)j-(d+1)i+a} y^{i+1} q^{(d+1)j-di+a}. \nonumber
\end{align}
The coefficient of $y^\l$ in \eqref{Fexp} is
\[
\sum_{j=\l-1}^\infty \binom{j}{i} x^{(d+1)j-(d+1)(\l-1)+a} q^{(d+1)j-d(\l-1)+a}.
\]
Thus when $(d+1)j=n-a+d(\l-1)$, we get the desired coefficient of $x^\alpha y^\lambda q^{\alpha+\lambda-1}$ in \eqref{Fexp}, namely, 
\[
\binom{\frac{\a-a}{d+1}+\l-1}{\l-1}.
\]
We note that the hypothesis $\alpha \geq a$ is equivalent to the requirement that $j=\frac{n-a+d(\l-1)}{d+1}\geq \l-1$.  Moreover for a partition to be counted by $f_d^{(a)}(\a,\l)$ it is required that $\alpha \equiv a \pmod{d+1}$.  Thus the top entry of the binomial coefficient is a nonnegative integer when $f_d^{(a)}(\a,\l)$ is nonzero.
\end{proof}

We now prove Theorem \ref{theorem:had=fad}.

\begin{proof}[Proof of Theorem \ref{theorem:had=fad}]
First, we have already shown the first statement, that $h_d^{(a)}(n)=f_d^{(a)}(n)$, in \eqref{hd=fd}.  Thus we next consider the refinements of $h_d^{(a)}(n)$ and $f_d^{(a)}(n)$ in terms of arm length $\a$ (or equivalently leg length $\l$).  Since $n=\a+\l-1$, it follows that $1\leq \a \leq n$ if and only if $1\leq \l \leq n$.  Thus we have
\begin{align*}
h_d^{(a)}(n)  &= \sum_{\a=1}^n h_d^{(a)}(\a,n-\a+1) = \sum_{\l=1}^n h_d^{(a)}(n-\l+1,\l), \\
f_d^{(a)}(n)  &= \sum_{\a=1}^n f_d^{(a)}(\a,n-\a+1) = \sum_{\l=1}^n f_d^{(a)}(n-\l+1,\l).
\end{align*}
However, not all terms in these sums are nonzero.  In particular, using Lemma \ref{lem:Hbinom}, we see that $h_d^{(a)}(\a,\l) \neq 0$ when $a+d(\l-1) \leq \a \leq n$, which is equivalent to $\l \leq \lfloor \frac{n-a}{d+1} \rfloor +1$.  Moreover, from Lemma \ref{lem:Fbinom} we have that $f_d^{(a)}(\a,\l) \neq 0$ when $a \leq \a \leq n$ and $\alpha\equiv a \pmod{d+1}$, which is equivalent to $\a = a+(d+1)(k-1)$ for integers $1\leq k \leq \lfloor \frac{n-a}{d+1} \rfloor +1$.  Namely,
\begin{equation} \label{h_refinement}
h_d^{(a)}(n)  = \sum_{\l=1}^{\lfloor \frac{n-a}{d+1} \rfloor +1}  h_d^{(a)}(n-\l+1,\l),  
\end{equation}
\begin{equation} \label{f_refinement}
f_d^{(a)}(n) =  \!\!\!\!\!\!\! \sum_{\substack{a\leq \a \leq n \\ \a \equiv a \!\!\!\! \pmod{d+1}}} \!\!\!\!\!  f_d^{(a)}(\a,n-\a+1) 
= \!\!\! \sum_{k=0}^{\lfloor \frac{n-a}{d+1} \rfloor} f_d^{(a)}(a + k(d+1), n-a-k(d+1)+1).
\end{equation}
Thus we see that we have the same number of terms in each sum.  We will now show that in fact these sums are merely permutations of each other.  Using the fact that $\binom{A}{B} = \binom{A}{A-B}$, we see from Lemma \ref{lem:Fbinom} that 
\begin{equation} \label{fdeval}
f_d^{(a)}(\a', \l') =  \binom{\frac{\a'-a}{d+1}+\l'-1}{\l'-1}  = \binom{\frac{\a'-a}{d+1}+\l'-1}{\frac{\a'-a}{d+1}}.
\end{equation}
Given $(\a, \l)$ with $n=\a+\l-1$, setting $\a' = a+(d+1)(\l-1)$ and $\l'= \a-a-d(\l-1)+1$ gives that $\a'+\l'-1 = \a+\l-1$, so the pair $(\a,\l)$ and $(\a',\l')$ will yield the same perimeter $n$.  From \eqref{fdeval} we obtain
\[
f_d^{(a)}(\a', \l') = \binom{\a-a-(d-1)(\l-1)}{\l-1} = h_d^{(a)}(\a, \l),
\]
which gives the desired duality.
\end{proof}

\begin{example}
As an example of the duality expressed in Theorem \ref{theorem:had=fad}, Figure \ref{fig:figure2} displays the fixed perimeter correspondence between $h_{1}^{(2)}(9)$ and $f_{1}^{(2)}(9)$. On the left, we show possible perimeters for partitions counted by $h_1^{(2)}(9)$, and on the right we show possible perimeters for partitions counted by $f_1^{(2)}(9).$  Leg lengths in the same row have the same number of associated partitions.
\end{example}

\scriptsize

\begin{center}

\begin{figure}[h!]
    $\begin{matrix}
        {\begin{matrix}
        \begin{array}{>{\columncolor{olive!20}}ccccccccc}
            \bullet & \bullet & \bullet & \bullet & \bullet & \bullet & \bullet & \bullet & \bullet \\
        \end{array}
        \end{matrix}} &

        {\begin{matrix}
        \begin{array}{>{\columncolor{olive!20}}cc}
            \bullet & \bullet \\
            \bullet \\
            \bullet \\
            \bullet \\
            \bullet \\
            \bullet \\
            \bullet \\
            \bullet \\
        \end{array}
        \end{matrix}} & \normalsize \text{1 partition} \scriptsize \\

    && \\
    && \\

        {\begin{matrix}
            \begin{array}{>{\columncolor{red!20}}cccccccc}
             \bullet & \bullet & \bullet & \bullet & \bullet & \bullet & \bullet & \bullet\\
             \bullet \\
             \end{array}
        \end{matrix}} &

        {\begin{matrix}
           \begin{array}{>{\columncolor{red!20}}cccc}
             \bullet & \bullet & \bullet & \bullet \\
             \bullet \\
             \bullet \\
             \bullet \\
             \bullet \\
             \bullet 
             \end{array}
        \end{matrix}} & \normalsize \text{6 partitions} \scriptsize \\

    && \\
    && \\

        {\begin{matrix}
        \begin{array}{>{\columncolor{blue!20}}ccccccc}
            \bullet & \bullet & \bullet & \bullet & \bullet & \bullet & \bullet\\
            \bullet \\
            \bullet 
        \end{array}
        \end{matrix}} &

        {\begin{matrix}
           \begin{array}{>{\columncolor{blue!20}}cccccc}
           \bullet & \bullet & \bullet & \bullet & \bullet & \bullet \\
            \bullet  \\
            \bullet  \\
            \bullet \\
            \end{array}
        \end{matrix}}& \normalsize \text{10 partitions} \scriptsize \\

&& \\
&& \\
      
        {\begin{matrix}
            \begin{array}{>{\columncolor{green!10}}cccccc}
            \bullet & \bullet & \bullet & \bullet & \bullet & \bullet\\
            \bullet \\
            \bullet \\
            \bullet 
            \end{array}
        \end{matrix}} &

        \begin{matrix}
        \begin{array}{>{\columncolor{green!10}}cccccccc}
            \bullet & \bullet & \bullet & \bullet & \bullet & \bullet & \bullet & \bullet \\
            \bullet 
        \end{array}
        \end{matrix} & \normalsize \text{4 partitions} \scriptsize \\
    \end{matrix}$  
\normalsize    
\caption{\label{fig:figure2}} Counting partitions with perimeter $9$ by arm and leg length.
\end{figure}

\normalsize

\end{center}

\normalsize 

\section{A reverse Alder-type analogue}\label{reversealdersection}

Here we compare $h_d^{(a)}(n)$ and $\ell_d^{(a)}(n)$ to investigate whether more directly analogous Alder-type analogues hold. 

\subsection{Degenerate Case} When $d$ is odd and $a=\frac{d+3}{2}$, then there is only one congruence class available for parts in partitions counted by $\ell_d^{(a)}(n)$.  Thus in this special case we obtain a direct Alder-type analogue. 

\begin{proposition} For positive integers $n,d$ with $d$ odd,
\[
h_d^{(\frac{d+3}{2})}(n) \geq \ell_d^{(\frac{d+3}{2})}(n).
\]
\end{proposition}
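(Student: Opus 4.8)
The plan is to replace both sides by explicit rational generating functions and then verify that their difference has nonnegative power-series coefficients. Write $a=\frac{d+3}{2}$, which is a positive integer since $d$ is odd (and one checks $a\le d+1$). From \eqref{Hda} with $x=y=1$,
\[
\sum_{n\ge 1} h_d^{(a)}(n)\,q^n=\frac{q^a}{1-q-q^{d+1}}.
\]
For $\ell_d^{(a)}(n)$, I would first observe that in this degenerate case the residues $+a$ and $-a$ modulo $d+3$ coincide, so $\ell_d^{(a)}(n)$ counts partitions of perimeter $n$ all of whose parts are $\equiv a\pmod{d+3}$; equivalently $\ell_d^{(a)}(n)=f_{d+2}^{(a)}(n)$. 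Hence \eqref{Fda} with $d$ replaced by $d+2$ (and $x=y=1$) gives
\[
\sum_{n\ge 1}\ell_d^{(a)}(n)\,q^n=\frac{q^a}{1-q-q^{d+3}}.
\]
(One can also re-run the Ferrers-diagram word argument of Section \ref{genfunctionssection} directly: the parts are now odd multiples of $a$, so the initial run of $E$'s has length $a$ plus a multiple of $d+3$, and every later run of $E$'s between distinct part sizes has length a multiple of $d+3$, yielding the same denominator $1-q-q^{d+3}$.)

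Subtracting and clearing denominators,
\[
\sum_{n\ge 1}\bigl(h_d^{(a)}(n)-\ell_d^{(a)}(n)\bigr)q^n
=\frac{q^a\bigl(q^{d+1}-q^{d+3}\bigr)}{(1-q-q^{d+1})(1-q-q^{d+3})}
=\frac{q^{a+d+1}}{1-q-q^{d+1}}\cdot\frac{1-q^2}{1-q-q^{d+3}},
\]
so it suffices to show that each of the two factors on the right has nonnegative coefficients. The first is clear from $\frac{1}{1-q-q^{d+1}}=\sum_{k\ge 0}(q+q^{d+1})^k$. For the second, write $\frac{1}{1-q-q^{d+3}}=\sum_{m\ge0}c_m q^m$; then $c_0=1$ and $c_m=c_{m-1}+c_{m-d-3}$ for $m\ge 1$ (with $c_j=0$ for $j<0$), so $(c_m)$ is nondecreasing, whence $(1-q^2)\sum_m c_m q^m=\sum_m(c_m-c_{m-2})q^m$ has nonnegative coefficients. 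A product of two power series with nonnegative coefficients has nonnegative coefficients, so $h_d^{(a)}(n)\ge\ell_d^{(a)}(n)$ for every $n$.

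The only steps that demand real attention are pinning down the generating function of $\ell_d^{(a)}(n)$ in this degenerate case — i.e.\ the identification $\ell_d^{(a)}(n)=f_{d+2}^{(a)}(n)$ coming from $2a = d+3$ — together with the factorization of the difference into two manifestly nonnegative series; the nondecreasing-coefficients observation is then routine algebra. I do not expect a combinatorial injection to be the right tool here: the naive ``keep the partition'' maps fail because partitions counted by $\ell_d^{(a)}(n)$ may have repeated parts (so need not be $d$-distinct), and the allowable part-sets for $\ell_d^{(a)}$ and $f_d^{(a)}$ are not nested.
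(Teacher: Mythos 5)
Your proof is correct, but after the shared first step it diverges from the paper's argument. Both proofs hinge on the same degenerate-case observation: since $2a=d+3$, the residues $\pm a$ coincide modulo $d+3$, so $\ell_d^{(a)}(n)=f_{d+2}^{(a)}(n)$. The paper then finishes in two strokes with no computation: it invokes Theorem \ref{theorem:had=fad} (with $d+2$ in place of $d$) to get $f_{d+2}^{(a)}(n)=h_{d+2}^{(a)}(n)$, and concludes from the trivial containment of $(d+2)$-distinct partitions inside $d$-distinct partitions that $h_{d+2}^{(a)}(n)\leq h_d^{(a)}(n)$. You instead compare the specializations $\frac{q^a}{1-q-q^{d+1}}$ and $\frac{q^a}{1-q-q^{d+3}}$ of \eqref{Hda} and \eqref{Fda} directly, factoring the difference as $\frac{q^{a+d+1}}{1-q-q^{d+1}}\cdot\frac{1-q^2}{1-q-q^{d+3}}$ and checking each factor has nonnegative coefficients (the monotonicity argument for the recurrence $c_m=c_{m-1}+c_{m-d-3}$ is correct). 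What your route buys is independence from the duality theorem: it needs only the $x=y=1$ generating functions, and the factorization quantifies the excess $h_d^{(a)}(n)-\ell_d^{(a)}(n)$ as a manifestly nonnegative series. What the paper's route buys is brevity and a structural explanation: once $\ell_d^{(a)}$ is transported to $h_{d+2}^{(a)}$, the inequality \emph{is} an inclusion of partition sets, $\mathcal{H}_{d+2}^{(a)}(n)\subseteq\mathcal{H}_d^{(a)}(n)$ -- so your closing remark that an injection is not the right tool is true only for naive maps applied directly to $\mathcal{L}_d^{(a)}(n)$; combined with the duality, an injection is exactly how the paper finishes.
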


\begin{proof}
Since $d$ is odd, we have by definition and Theorem \ref{theorem:had=fad} that
\begin{equation}  \label{prop:lspecial}
\ell_d^{(\frac{d+3}{2})}(n)=f_{d+2}^{(\frac{d+3}{2})}(n) = h_{d+2}^{(\frac{d+3}{2})}(n).
\end{equation}
Moreover, by definition we see that $h_{d+2}^{(\frac{d+3}{2})}(n) \leq h_d^{(\frac{d+3}{2})}(n)$ since partitions that are $(d+2)$-distinct are also $d$-distinct. 
\end{proof}

\subsection{Nondegenerate Cases}
When $a < \frac{d+3}{2}$, $\ell_d^{(a)}(\alpha,\lambda)$ counts partitions with parts coming from two different classes modulo $d+3$.  In this subsection we will prove Theorem \ref{ReverseAlder}.  But first we establish preliminary inequalities and a refinement formula for $\ell_d^{(a)}(\alpha,\lambda)$.

Observe that for positive integers $x,y,z$ with $z\geq y$,
\[
\frac{(x+z)!}{x!}=\left(\frac{x+z}{z}\right) \left(\frac{x+z-1}{z-1}\right) ... \left(\frac{x+y+1}{y+1}\right) \frac{(x+y)!}{y!},
\]
and thus it follows that $\frac{(x+y)!}{y!} \leq \frac{(x+z)!}{z!}$, and
\begin{equation}\label{binom_ineq}
\binom{x+y}{y}\leq \binom{x+z}{z}.
\end{equation} 

\begin{lemma}\label{lem:fell_ineq}
Let $d,a,n$ be positive integers with $a < \frac{d+3}{2}$ and $n \geq a$.  Then,

\begin{align*}
0 \leq   \left \lfloor \frac12 \left \lfloor \frac{n-a}{d+1} \right \rfloor \right \rfloor   & \leq  \left \lfloor \frac{n-a}{d+3} \right \rfloor, \\
0 \leq  \left \lfloor \frac12 \left \lfloor \frac{n-a}{d+1} +1 \right \rfloor  \right \rfloor & \leq  \left \lfloor \frac{n+a}{d+3} \right \rfloor
\end{align*}

\end{lemma}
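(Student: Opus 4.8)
The plan is to prove both inequalities purely by elementary floor-function manipulations, reducing each to the trivial fact that $\lfloor \lfloor x \rfloor / 2 \rfloor = \lfloor x/2 \rfloor$ for real $x$ together with monotonicity of the floor and the comparison $\frac{1}{2(d+1)} \geq \frac{1}{d+3}$, which holds precisely because $d+3 \geq 2(d+1)$ fails for $d\geq 2$ --- wait, in fact $d+3 \geq 2(d+1) \iff 1 \geq d$, so the inequality I want is $\frac{1}{2(d+1)} \leq \frac{1}{d+3}$ for $d \geq 2$ and equality-ish at $d=1$. Let me be careful: for the first claimed inequality I need $\lfloor \frac12\lfloor \frac{n-a}{d+1}\rfloor\rfloor \leq \lfloor\frac{n-a}{d+3}\rfloor$; since the left side equals $\lfloor \frac{n-a}{2(d+1)}\rfloor$ by the nested-floor identity, and the floor is monotonic, it suffices to show $\frac{n-a}{2(d+1)} \leq \lfloor\frac{n-a}{d+3}\rfloor$ --- no, that is too strong. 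The right approach is instead: $\lfloor\frac{n-a}{2(d+1)}\rfloor \leq \lfloor\frac{n-a}{d+3}\rfloor$ follows from $\frac{1}{2(d+1)} \leq \frac{1}{d+3}$, i.e. $d+3 \leq 2(d+1) = 2d+2$, i.e. $1 \leq d$, which always holds. So the first inequality is immediate once I establish the nested-floor identity and note $d \geq 1$.

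First I would record the elementary identity $\big\lfloor \lfloor x\rfloor / m\big\rfloor = \lfloor x/m\rfloor$ for any real $x$ and positive integer $m$, applied with $m=2$. For the first displayed inequality, the nonnegativity is clear since $n \geq a$ makes $\frac{n-a}{d+1} \geq 0$; then $\big\lfloor \frac12 \lfloor\frac{n-a}{d+1}\rfloor\big\rfloor = \lfloor \frac{n-a}{2(d+1)}\rfloor \leq \lfloor\frac{n-a}{d+3}\rfloor$ because $2(d+1) \geq d+3$ for $d\geq 1$ and the floor is nondecreasing in its argument. For the second inequality, I would first rewrite $\lfloor \frac{n-a}{d+1}+1\rfloor = \lfloor\frac{n-a}{d+1}\rfloor + 1 = \lfloor \frac{n-a+d+1}{d+1}\rfloor = \lfloor\frac{n+d+1-a}{d+1}\rfloor$; applying the nested-floor identity again, $\big\lfloor\frac12\lfloor\frac{n-a}{d+1}+1\rfloor\big\rfloor = \lfloor\frac{n+d+1-a}{2(d+1)}\rfloor$. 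So I need $\lfloor\frac{n+d+1-a}{2(d+1)}\rfloor \leq \lfloor\frac{n+a}{d+3}\rfloor$. Nonnegativity of the left side holds since $n \geq a$ and $d+1 > 0$ give $n+d+1-a \geq d+1 > 0$.

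The main obstacle is the second inequality, where one cannot simply compare the two rational arguments coefficient-by-coefficient: we have $\frac{n+d+1-a}{2(d+1)}$ versus $\frac{n+a}{d+3}$, and as $n\to\infty$ the first grows like $\frac{n}{2(d+1)}$ while the second grows like $\frac{n}{d+3} \geq \frac{n}{2d+2}$, so asymptotically the desired inequality is fine, but for small $n$ the additive constants matter and one must check the floors genuinely. The clean way is to show $\frac{n+d+1-a}{2(d+1)} \leq \frac{n+a}{d+3} + 1$ would not suffice for floors; instead I would argue directly: let $t = \lfloor\frac{n+d+1-a}{2(d+1)}\rfloor$, so $2(d+1)t \leq n+d+1-a$, i.e. $n \geq 2(d+1)t - (d+1) + a = (2t-1)(d+1) + a$. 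It suffices to show this forces $\frac{n+a}{d+3} \geq t$, i.e. $n \geq (d+3)t - a$. So I must verify $(2t-1)(d+1) + a \geq (d+3)t - a$, i.e. $2(d+1)t - (d+1) + a \geq (d+3)t - a$, i.e. $(2d+2 - d - 3)t \geq d+1 - 2a$, i.e. $(d-1)t \geq d+1-2a$. Since $a < \frac{d+3}{2}$ gives $2a \leq d+2$, hence $d+1-2a \geq -1$; and since $a \geq 1$ we can also bound $d+1-2a \leq d-1$, so when $t \geq 1$ the inequality $(d-1)t \geq d+1-2a$ holds (as $(d-1)t \geq d-1 \geq d+1-2a$). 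The $t=0$ case is exactly the nonnegativity already noted. One subtlety: if $d=1$ then $d-1=0$ and I need $0 \geq d+1-2a = 2-2a$, i.e. $a \geq 1$, which holds. I would then assemble these pieces, and write up the verification of $(d-1)t \geq d+1-2a$ splitting on $t=0$ versus $t\geq 1$, to complete the proof of Lemma~\ref{lem:fell_ineq}.
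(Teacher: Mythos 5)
Your proof is correct, and for the harder half it takes a genuinely different route from the paper. For the first inequality your argument (rewrite the left side as $\lfloor\frac{n-a}{2(d+1)}\rfloor$ via $\lfloor\lfloor x\rfloor/2\rfloor=\lfloor x/2\rfloor$ and compare denominators using $2(d+1)\geq d+3$) is essentially the paper's. For the second, the paper splits on $n<d+3-a$ versus $n\geq d+3-a$: in the small range it shows both sides vanish, and in the large range it proves the real-number inequality $\frac12\big(\frac{n-a}{d+1}+1\big)\leq\frac{n+a}{d+3}$, which requires a separate check at $d=1$ (where the hypothesis $a<\frac{d+3}{2}$ forces $a=1$) and, for $d\geq 2$, an algebraic reduction to $n\geq\frac{(d+1)(d+3)-a(3d+5)}{d-1}$. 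You instead stay at the level of floors: using $\lfloor x+1\rfloor=\lfloor x\rfloor+1$ and the nested-floor identity you rewrite the left side as $t=\lfloor\frac{n+d+1-a}{2(d+1)}\rfloor$ and show $n\geq(d+3)t-a$, which reduces to the integer inequality $(d-1)t\geq d+1-2a$; for $t\geq 1$ this follows from $a\geq 1$ alone, and $t=0$ is trivial. This buys a cleaner argument with no case split on $n$ and no special treatment of $d=1$, and it in fact establishes the lemma assuming only $a\geq 1$, showing the hypothesis $a<\frac{d+3}{2}$ is not needed; the paper's route proves the stronger pointwise real inequality on the range $n\geq d+3-a$, which is exactly why it must handle small $n$ separately. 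One small slip: in the $t=0$ case what you need is nonnegativity of the right-hand side, $\lfloor\frac{n+a}{d+3}\rfloor\geq 0$, not the left-hand nonnegativity you cite; since $n,a\geq 1$ this is immediate, so the gap is cosmetic.
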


\begin{proof}
Since $d\geq 1$ and $n-a\geq 0$ it follows that $(d+3)(n-a) \leq 2(d+1)(n-a)$, and so
\[
\frac12 \left( \frac{n-a}{d+1} \right) \leq \frac{n-a}{d+3},
\]
which gives the first inequality. 

For the second, we first consider when $n<d+3-a$.  In this case $\lfloor \frac{n+a}{d+3} \rfloor =0$, so it suffices to show that $\lfloor \frac{n-a}{d+1} \rfloor =0$.  Since $2a+1\geq 3$, our assumption that $n<d+3-a$ implies that $n<d+a+1$, and thus $\lfloor \frac{n-a}{d+1} \rfloor =0$. 

Now let $n\geq d+3-a$.  Since $\lfloor x \rfloor \leq x$ and $x\leq y$ implies $\lfloor x \rfloor \leq \lfloor y \rfloor$, it suffices to show that 
\begin{equation} \label{eq:revised}
\frac12 \left( \frac{n-a}{d+1} +1\right)  \leq   \frac{n+a}{d+3}.
\end{equation}
If $d=1$, then the assumption $a < \frac{d+3}{2}$ forces $a=1$, and we obtain directly that both sides equal $\frac{n+1}{4}$.  When $d\geq 2$, inequality \eqref{eq:revised} is equivalent to $2(d+1)(n+a) \geq (d+3)(n-a+d+1)$, or in $n$,
\begin{equation} \label{eq:in_n}
n \geq \frac{(d+1)(d+3) -a(3d+5)}{d-1}.
\end{equation}
Since $a\geq 1$, writing $d+3=(3d+5)-(d-1)$ shows that $a((3d+5) - (d-1)) \geq 2(d+3)$, and further writing $2(d+3) = (d+1)(d+3) - (d-1)(d+3)$, yields 
\[
d+3-a \geq \frac{(d+1)(d+3) -a(3d+5)}{d-1}.
\]
Thus our assumption $n\geq d+3-a$ guarantees that \eqref{eq:in_n} and thus \eqref{eq:revised} is satisfied.
\end{proof}

We next prove a refinement formula for $\ell_d^{(a)}(\alpha,\lambda)$  in terms of binomial coefficients akin to Lemmas \ref{lem:Hbinom} and \ref{lem:Fbinom}. 

\begin{lemma}\label{prop:Lbincof}
Fix positive integers $d$, $a$, $\a$, and $\l$ such that $a < \frac{d+3}{2}$.  Then $\ell_d^{(a)}(\a,\l)$ is nonzero if and only if $\alpha \geq a$ and $\alpha\equiv \pm a \pmod{d+3}$, and in that case,
\[
\ell_d^{(a)}(\alpha, \lambda)=
\begin{cases} \displaystyle \binom{2(\frac{\alpha-a}{d+3})+\lambda-1}{\lambda-1}  & \text{ if } \alpha \equiv a \!\!\!\! \pmod{d+3}, \\ \displaystyle \binom{2(\frac{\alpha+a}{d+3})+\lambda-2}{\lambda-1}  & \text{ if } \alpha \equiv -a \!\!\!\! \pmod{d+3}. \end{cases}
\]
\end{lemma}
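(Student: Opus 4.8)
The plan is to prove Lemma~\ref{prop:Lbincof} by a direct enumeration rather than by extracting coefficients from a generating function. First I would introduce the set of admissible part sizes
\[
S=\{m\in\mathbb{Z}_{>0}\;:\; m\equiv \pm a \pmod{d+3}\},
\]
and record that, because $1\le a<\tfrac{d+3}{2}$, the elements of $S$ listed in increasing order are
\[
a \;<\; (d+3)-a \;<\; (d+3)+a \;<\; 2(d+3)-a \;<\; 2(d+3)+a \;<\;\cdots,
\]
so the two residue classes $\equiv a$ and $\equiv -a$ strictly interlace and $a=\min S$. A partition counted by $\ell_d^{(a)}(\alpha,\lambda)$ is precisely a partition with exactly $\lambda$ parts, all lying in $S$, whose largest part equals $\alpha$; such a partition exists if and only if $\alpha\in S$, i.e.\ $\alpha\ge a$ and $\alpha\equiv\pm a\pmod{d+3}$, which is the claimed nonvanishing criterion. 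Assuming this holds, I would use the bijection ``delete one copy of the part $\alpha$'' to match the partitions counted by $\ell_d^{(a)}(\alpha,\lambda)$ with the multisets of size $\lambda-1$ drawn from $S_\alpha:=\{s\in S:\ s\le\alpha\}$, so that by stars and bars $\ell_d^{(a)}(\alpha,\lambda)=\binom{|S_\alpha|+\lambda-2}{\lambda-1}$. It then only remains to compute $|S_\alpha|$ in the two cases.

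If $\alpha\equiv a\pmod{d+3}$, I would write $\alpha=a+k(d+3)$ with $k=\tfrac{\alpha-a}{d+3}\ge 0$. The class $\equiv a$ contributes the $k+1$ values $a,\,a+(d+3),\dots,a+k(d+3)$; the class $\equiv -a$ contributes the values $j(d+3)-a$ with $1\le j\le k$, since $j(d+3)-a\le\alpha$ is equivalent to $j\le k+\tfrac{2a}{d+3}$ and $0<\tfrac{2a}{d+3}<1$. Hence $|S_\alpha|=2k+1$ and $\ell_d^{(a)}(\alpha,\lambda)=\binom{2(\frac{\alpha-a}{d+3})+\lambda-1}{\lambda-1}$. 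If instead $\alpha\equiv -a\pmod{d+3}$, I would write $\alpha=(k+1)(d+3)-a$ with $k\ge 0$, so $\tfrac{\alpha+a}{d+3}=k+1$. Now the class $\equiv -a$ contributes the $k+1$ values $(d+3)-a,\dots,(k+1)(d+3)-a=\alpha$, while the class $\equiv a$ contributes the values $a+j(d+3)$ with $0\le j\le k$, since $a+j(d+3)\le\alpha$ is equivalent to $j\le (k+1)-\tfrac{2a}{d+3}$ and again $0<\tfrac{2a}{d+3}<1$. Hence $|S_\alpha|=2k+2=2\big(\tfrac{\alpha+a}{d+3}\big)$ and $\ell_d^{(a)}(\alpha,\lambda)=\binom{2(\frac{\alpha+a}{d+3})+\lambda-2}{\lambda-1}$.

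I expect the only real obstacle to be the bookkeeping in this last step. The hypothesis $a<\tfrac{d+3}{2}$, i.e.\ $2a<d+3$ and so $0<\tfrac{2a}{d+3}<1$, is exactly what makes the count of the off-class admissible sizes below $\alpha$ come out with no off-by-one error, and also what makes $k$ unambiguously determined by $\alpha$ in each congruence case; I would take care to isolate these elementary inequalities. As an alternative I considered first deriving a bivariate generating function $L_d^{(a)}(x,y,q)=\sum_{\alpha,\lambda}\ell_d^{(a)}(\alpha,\lambda)x^\alpha y^\lambda q^{\alpha+\lambda-1}$ via a two-state transfer decomposition of the boundary word in the spirit of Section~\ref{genfunctionssection}, then reading off the coefficient of $x^\alpha y^\lambda q^{\alpha+\lambda-1}$ as in the proofs of Lemmas~\ref{lem:Hbinom} and \ref{lem:Fbinom}; this would parallel the earlier lemmas but is longer, so I would favor the direct count above.
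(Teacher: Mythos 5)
Your proposal is correct and follows essentially the same route as the paper: fix the largest part $\alpha$, count the remaining $\lambda-1$ parts as a multiset from the admissible values at most $\alpha$ via stars and bars, and then determine the size of that set in the two congruence cases (where the paper uses floor-function bounds, you make the count explicit through the inequality $0<\tfrac{2a}{d+3}<1$). No changes needed.
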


\begin{proof}
We can interpret this as a combinatorial problem.  The number of ways to choose $r$ elements from a set of size $k$ allowing repetitions is $\binom{k+r-1}{r}$ (a stars and bars problem, see \cite{Feller}).  The function $\ell_d^{(a)}(\alpha, \lambda)$ counts the number of partitions into exactly $\l$ parts from the set $\{x\in \mathbb{N} \mid x\equiv \pm a \pmod{d+3} \}$ such that the largest part is $\a$.  Since the part $\a$ is already known, that means we must count the number of ways to choose the other $\l-1$ parts.  Moreover, since $\a$ is the largest part, these parts must be chosen from $S=\{1\leq x\leq \a \mid x\equiv \pm a \pmod{d+3} \}$.  Thus we have that 
\[
\ell_d^{(a)}(\alpha, \lambda) = \binom{|S|+\l-2}{\l-1}.
\]  
The cardinality of $S$ depends on $\alpha$ modulo $d+3$.  In particular, 
\[
S=\{1\leq a+(d+3)k\leq \a \mid k\in \mathbb{Z}\} \, \sqcup \, \{1\leq -a+(d+3)k\leq \a \mid k\in \mathbb{Z}\}. 
\]
In the first subset, $0\leq k \leq \lfloor\frac{\alpha-a}{d+3}\rfloor$ and in the second $1\leq k \leq \lfloor\frac{\alpha+a}{d+3}\rfloor$, so 
\[
|S| = \begin{cases} 2(\frac{\alpha-a}{d+3})+1 & \text{ if } \alpha \equiv a \pmod{d+3}, \\  2(\frac{\alpha+a}{d+3}) & \text{ if } \alpha \equiv -a \pmod{d+3}, \end{cases}
\]
which gives us our desired result.  We note that it is clear from the definition that $\ell_d^{(a)}(\alpha, \lambda)$ is nonzero precisely when $\a$ is an allowable part. 
\end{proof}

\begin{remark}
We observe that this method will also give an alternate proof of Lemma \ref{lem:Fbinom}.  In that setting we wish to count the partitions with perimeter $\a+\l-1$, largest part $\a$, and parts coming from the set 
$S=\{1\leq a+(d+1)k\leq \a \mid k\in \mathbb{Z}\} = \left\{1\leq a+(d+1)k\leq \a \mid 0\leq k \leq \frac{\a-a}{d+1}\right\}$.  Thus 
\[
f_d^{(a)}(\a,\l)  = \binom{|S|+\l-2}{\l-1} = \binom{\frac{\a-a}{d+1}+\l-1}{\l-1}.
\] 
\end{remark}

We now consider the refinement of $\ell_d^{(a)}(n)$ in terms of arm length $\a$ (or leg length $\l$), as in the proof of Theorem \ref{theorem:had=fad}.  Since $n=\a+\l-1$, it follows that $1\leq \a \leq n$ if and only if $1\leq \l \leq n$.  We have
\[
\ell_d^{(a)}(n)  = \sum_{\a=1}^n \ell_d^{(a)}(\a,n-\a+1) = \sum_{\l=1}^n \ell_d^{(a)}(n-\l+1,\l),
\]
but not all terms in these sums are nonzero.  From Lemma \ref{prop:Lbincof}, we see that $\ell_d^{(a)}(\a,\l) \neq 0$ when $a \leq \a \leq n$ and $\alpha\equiv \pm a \pmod{d+3}$, which is equivalent to $\a$ satisfying either
\begin{align*}
\a & = a+(d+3)k, \text{ for } 0\leq k \leq \lfloor \frac{n-a}{d+3} \rfloor, \\
\a & = -a+(d+3)k, \text{ for } 1\leq k \leq \lfloor \frac{n+a}{d+3} \rfloor.
\end{align*}
Namely, for $n\geq a$,
\begin{multline} \label{ell_refinement}
\ell_d^{(a)}(n)  =   \sum_{\substack{a\leq \a \leq n \\ \a \equiv a \!\!\!\! \pmod{d+3}}} \!\!\!\!\!  \ell_d^{(a)}(\a,n-\a+1) +  \sum_{\substack{a\leq \a \leq n \\ \a \equiv -a \!\!\!\! \pmod{d+3}}} \!\!\!\!\!  \ell_d^{(a)}(\a,n-\a+1)\\
= \sum_{k=0}^{\lfloor \frac{n-a}{d+3} \rfloor} \ell_d^{(a)}(a + (d+3)k, n-a-(d+3)k+1) + \sum_{k=1}^{\lfloor \frac{n+a}{d+3} \rfloor} \ell_d^{(a)}(-a + (d+3)k, n+a-(d+3)k+1).
\end{multline}

\begin{remark}\label{rmk:arm_lengths}
Let $a < \frac{d+3}{2}$ and $n \geq a$.  We observe from \eqref{ell_refinement} that there are a total of $\left \lfloor \frac{n-a}{d+3} \right \rfloor + \left \lfloor \frac{n+a}{d+3} \right \rfloor +1$ possible arm lengths for partitions counted by $\ell_d^{(a)}(n)$.  Moreover from \eqref{f_refinement} we know there are $\left \lfloor \frac{n-a}{d+1} \right \rfloor + 1$ possible arm lengths for partitions counted by $f_d^{(a)}(n)$.  It follows that the number of possible arm lengths for partitions counted by $\ell_d^{(a)}(n)$ is equal to the sum of the number of possible arm lengths for partitions counted by $f_{d+2}^{(a)}(n)$ and $f_{d+2}^{(d+3-a)}(n)$, respectively.
\end{remark}

We are now able to prove Theorem \ref{ReverseAlder}, which gives a reverse Alder-type inequality for fixed perimeter partitions in the nondegenerate case.

\begin{proof} [Proof of Theorem \ref{ReverseAlder}]

We first observe that by definition, partitions counted by $\ell_d^{(a)}(n)$ and $h_d^{(a)}(n)$ must have parts of size at least $a$.  Thus when $n<a$, $\ell_d^{(a)}(n) = h_d^{(a)}(n) = 0$.  

For the remainder of the proof we assume $n\geq a$.  Since $d+1\geq \frac{d+3}{2}$ for all $d\geq 1$, from Theorem \ref{theorem:had=fad} it suffices to show that $\ell_d^{(a)}(n) \geq f_d^{(a)}(n)$ instead.

Applying Lemma \ref{prop:Lbincof} to \eqref{ell_refinement}, we obtain

\begin{equation} \label{ell_binom}
\ell_d^{(a)}(n)=\sum_{k=0}^{\lfloor \frac{n-a}{d+3} \rfloor} \binom{n-a-kd -k}{n-a-kd -3k} + \sum_{k=1}^{\lfloor \frac{n+a}{d+3}\rfloor}\binom{n+a-kd -k-1}{n+a-kd -3k}.
\end{equation}

Similarly, applying Lemma \ref{lem:Fbinom} to \eqref{f_refinement} gives that for $n\geq a$,
\[
 f_d^{(a)}(n)=\sum_{k=0}^{\lfloor \frac{n-a}{d+1} \rfloor}\binom{n-a-kd}{n-a-kd -k}.
\]
We split this into two sums based on whether $k$ is odd or even (and then reindex by $k$).  Using the fact that $\{0,1,\ldots,N\}$ contains $\lfloor \frac{N}{2} \rfloor +1$ evens and $\lfloor \frac{N+1}{2} \rfloor$ odds, we obtain for $n\geq a$,
\begin{equation} \label{f_binom}
f_d^{(a)}(n)=\sum_{k=0}^{\left \lfloor \frac12 \left \lfloor \frac{n-a}{d+1} \right \rfloor \right \rfloor}\binom{n-a-2kd}{n-a-2kd-2k}+\sum_{k=1}^{\left \lfloor \frac12  \left \lfloor \frac{n-a}{d+1} +1 \right \rfloor \right \rfloor} \binom{n-a-(2k-1)d}{n-a-(2k-1)d-(2k-1)}.
\end{equation}
From Lemma \ref{lem:fell_ineq}, we see that the first and second sums in \eqref{f_binom} contain no more terms than the first and second sums in \eqref{ell_binom}, respectively.  Thus we can compare each of the summands in \eqref{f_binom} directly to their counterparts in \eqref{ell_binom}.  For the first sums, setting $x=2k$, $y=n-a-2kd-2k$, and $z=n-a-kd-3k$ in \eqref{binom_ineq} gives the desired inequality.  For the second sums, setting $x=2k-1$, $y=n-a-(2k-1)d-(2k-1)$, and $z=n+a-kd-3k$ in \eqref{binom_ineq} gives the desired inequality.

\end{proof}

\begin{remark}
We note that Theorem \ref{ReverseAlder} can be proven by other comparisons of binomial coefficients as well (see \cite[Second pf. of Thm. 1.7]{CHS_proc}\footnote{Note that \cite[First pf. of Thm. 1.7]{CHS_proc} contains an error that is corrected in this paper.}).
\end{remark}

\section{Shift Inequalies}\label{ineqchainssection}

We now prove the shift inequalities for $h_d^{(a)}(n)$ and $\ell_d^{(a)}(n)$ in Proposition \ref{IneqChains}.

\begin{proof}[Proof of Proposition \ref{IneqChains}]
The inequalities for $h_d^{(a)}(n)$ follow directly in the same way as the corresponding inequalities for $q_d^{(a)}(n)$.  Namely, the first two are given by natural inclusions of the partitions counted by $h_{d}^{(a+1)}(n)$ and $h_{d+1}^{(a)}(n)$ into those counted by $h_{d}^{(a)}(n)$. The third is achieved by an injection from the partitions counted by $h_d^{(a)}(n)$ into those counted by $h_d^{(a)}(n+1)$ by increasing the largest part of any partition counted by $h_d^{(a)}(n)$ by $1$. 
        
The inequalities for $\ell_d^{(a)}(n)$ do not have direct analogues for $Q_d^{(a)}(n)$.  To show $\ell_d^{(a)}(n) \leq \ell_d^{(a)}(n+1)$, we define an injection on partitions counted by $\ell_d^{(a)}(n)$ by adding a part of size $a$.  This increases the perimeter by exactly $1$, and thus results in a partition counted by $\ell_d^{(a)}(n+1)$.  

To show $\ell_{d+1}^{(a)}(n) \leq \ell_d^{(a)}(n)$, we first observe from \eqref{ell_binom} that
\begin{align*}
\ell_d^{(a)}(n) & =\sum_{k=0}^{\lfloor \frac{n-a}{d+3} \rfloor} \binom{n-a-kd -k}{n-a-kd -3k} + \sum_{k=1}^{\lfloor \frac{n+a}{d+3}\rfloor}\binom{n+a-kd -k-1}{n+a-kd -3k}, \\
\ell_{d+1}^{(a)}(n) & =\sum_{k=0}^{\lfloor \frac{n-a}{d+4} \rfloor} \binom{n-a-kd -2k}{n-a-kd -4k} + \sum_{k=1}^{\lfloor \frac{n+a}{d+4}\rfloor}\binom{n+a-kd -2k-1}{n+a-kd -4k}.
\end{align*}
   
Given the shape of the terms, and since clearly $\left \lfloor \frac{n-a}{d+4} \right \rfloor \leq \left \lfloor \frac{n-a}{d+4} \right \rfloor$ and $\left \lfloor \frac{n+a}{d+4} \right \rfloor \leq \left \lfloor \frac{n+a}{d+4} \right \rfloor$, we can directly compare terms using \eqref{binom_ineq}.  Namely, letting $x=2k$, $y=n-a-kd-4k$, and $z=n-a-kd-3k$, \eqref{binom_ineq} yields that
$$\binom {n-a-kd-k}{n-a-kd-3k} \geq \binom {n-a-kd-2k}{n-a-kd-4k}.$$
And letting $x=2k-1$, $y=n+a-kd-4k$, and $z=n+a-kd-3k$, \eqref{binom_ineq} yields that
$$\binom {n+a-kd-k-1}{n+a-kd-3k} \geq \binom{n+a-kd-2k-1}{n+a-kd-4k}.$$
Thus $\ell_{d+1}^{(a)}(n) \leq \ell_d^{(a)}(n)$.
\end{proof}

\section{Conclusion and further questions}\label{furtherqssection}

It is interesting to observe the various ways that partitions with perimeter $n$ align with and differ from partitions with size $n$.

For example it would be interesting to explore the result of Xia \cite{Xia} in the fixed perimeter setting.  Or to what extent shift equalities in $a$ fail.  Computational evidence suggests that for a given $a$, $\ell_d^{(a)}(n) \leq \ell_d^{(a+1)}(n)$ for all but finite $n$. 

Furthermore, Kang and Kim \cite{K-K} have compared asymptotics of $q_d^{(a)}(n)$ with a generalized version of $Q_d^{(a)}(n)$ given by 
\[
Q_d^{(a,b)}(n) = p(n \mid \text{parts} \equiv  a \mbox{ or } b \!\!\!\!\pmod{d+3}).
\]
They showed that there is a specific tipping point $M_d$ such that when $d+3>M_d$, $q_d^{(a)}(n)-Q_d^{(a,b)}(n)$ grows to $\infty$, and that when $d+3\leq M_d$, $Q_d^{(a,b)}(n)-q_d^{(a)}(n)$ grows to $\infty$.  In light of this result, we define an analogous generalization of $\ell_d^{(a)}(n)$.

\begin{definition} For positive integers $a,b,d,$ and $n,$
    \begin{multline*}
    \ell_d^{(a,b)}(n)= r(n\mid  \text{with parts } \equiv a,b \!\!\!\!\pmod{d+3})\\= \sum_{k=0}^{\left \lfloor \frac{n-a}{d+3} \right \rfloor} \binom{n-q-kd-k}{n-a-kd-3k} + \sum_{k=1}^{\left \lfloor \frac{n-b}{d+3} \right \rfloor} \binom{n-b-kd-k-1}{n-b-kd-3k}.
    \end{multline*}
\end{definition}
It would be interesting to explore $\ell_d^{(a,b)}(n)$ in the context of Kang and Kim \cite{K-K}.


\end{document}